\newcommand{\numberset}{\mathbb} 
\newcommand{\R}{\numberset{R}} 
\newcommand{\varphie}{\varphi_{\varepsilon, i}}
\newcommand{\Uk}{\bigcup_{k=1}^n B(a_k,r_0)}
\numberwithin{equation}{section}
\newtheorem{thm}{\indent\bf {Theorem}}[section]
\newtheorem{lemma} [thm] {\indent\bf Lemma}
\newtheorem{prop}[thm]{\indent\bf Proposition}
\theoremstyle{definition}
\def\author@andify{%
   \nxandlist {\unskip ,\penalty-1 \space\ignorespaces}%
     {\unskip {} }%
     {\unskip ,\penalty-2 \space }%
}
\title{Weighted multipolar Hardy inequalities and evolution problems with Kolmogorov operators 
perturbed by singular potentials}
\author[A. Canale]{Anna Canale}
\address{Dipartimento di Ingegneria dell'Informazione ed Elettrica e Matematica Applicata (Diem), 
Universit\'a degli Studi di Salerno, Via Giovanni Paolo II, 132, 84084 Fisciano
(Sa), Italy.}
\email{acanale@unisa.it}
\author[F.Pappalardo]{Francesco Pappalardo}
\address{Dipartimento di Matematica e Applicazioni \textquotedblleft Renato  Caccioppoli\textquotedblright,
Universit\'a degli Studi di Napoli Federico II, Complesso Universitario Monte S. Angelo, Via Cintia, 80126 Napoli, Italy.}
\email{francesco.pappalardo@unina.it}
\author[C. Tarantino]{Ciro Tarantino}
\address{Dipartimento di Scienze Economiche e Statistiche, 
Universit\'a degli Studi di Napoli Federico II, Complesso Universitario Monte S. Angelo,
 Via Cintia, 80126 Napoli, Italy.}
\email{ctarant@unina.it}
\thanks{The first two authors are members of the Gruppo Nazionale per l'Analisi Matematica, la Probabilit\'a e le loro Applicazioni 
(GNAMPA) of the Istituto Nazionale di Alta Matematica (INdAM)}
\subjclass[2010]{35K15, 35K65, 35B25, 34G10, 47D03}
\begin{document}
\maketitle
\begin{abstract}

The main results in the paper are the weighted multipolar Hardy inequalities
\begin{equation*}
c\int_{\R^N}\sum_{i=1}^n\frac{u^2}{|x-a_i|^2}\,d\mu
\leq\int_{\R^N}|\nabla u |^2d\mu+
K\int_{\R^N} u^2d\mu,
\end{equation*}
in $\R^N$ for any $u$ in a suitable weighted Sobolev space, with $0<c\le 
c_{o,\mu}$, $a_1,\dots,a_n\in \R^N$, $K$ constant.
The weight functions $\mu$ are of a quite general type. 

The paper fits in the framework of the study of Kolmogorov operators
\begin{equation*}
Lu=\Delta u+\frac{\nabla \mu}{\mu}\cdot\nabla u,
\end{equation*}
perturbed by multipolar inverse square potentials, and of the related evolution problems.

The necessary and sufficient conditions for the existence 
of positive exponentially bounded in time solutions to the associated initial 
value problem are based on weighted Hardy inequalities.
The optimality of the constant constant $c_{o,\mu}$ allow us to state the nonexistence of positive solutions. 

We follow the Cabr\'e-Martel's approach. To this aim we state some properties
 of the operator $L$, of its corresponding $C_0$-semigroup and density results.

\end{abstract}

\maketitle

\bigskip

{\it Keywords}: Weighted Hardy inequality, optimal constant, Kolmogorov operators, 
multipolar potentials.\\

\bigskip

\section{Introduction}
\label{Introduzione}

The paper concerns the weighted multipolar Hardy inequalities in $\R^N$
for a class of weight functions $\mu$. 
The main motivation for our interest in 
Hardy inequalities is the key role that these play in the study of Kolmogorov operators 
\begin{equation}\label{L Kolmogorov}
Lu=\Delta u+\frac{\nabla \mu}{\mu}\cdot\nabla u,
\end{equation}
defined on smooth functions,
perturbed by singular potentials and of the related evolution problems
$$
(P)\quad \left\{\begin{array}{ll}
\partial_tu(x,t)=Lu(x,t)+V(x)u(x,t),\quad \,x\in {\mathbb R}^N, t>0,\\
u(\cdot ,0)=u_0\geq 0\in L_\mu^2
\end{array}
\right. $$
where $L_\mu^2:=L(\R^N, d\mu)$,  with $d\mu(x)=\mu(x)dx$, 
$0\le V\in L_{loc}^1(\R^N)$.

The potentials we consider are inverse square potentials of multipolar type 
\begin{equation}\label{V}
V(x)=\sum_{i=1}^n \frac{c}{|x-a_i|^2},\quad c>0, \quad a_1\dots,a_n\in \R^N.
\end{equation}
In literature there exist reference papers 
in the case of Schr\"odinger operators with singular potentials of the type 
$V(x)\sim\frac{c}{|x|^2}$, $c>0$. These potentials are interesting for the criticality:
the strong maximum principle and Gaussian bounds fail (see \cite{Aronson}). 

The operator $\Delta+\frac{c}{|x|^{2}}$ has the same homogeneity as the Laplacian.
In 1984 by P. Baras and J. A. Goldstein in \cite{BarasGoldstein}
showed that the evolution problem $(P)$ with $L=\Delta$
admits a unique positive solution
if $c\leq c_o=\left( \frac{N-2}{2} \right)^{2}$ and no positive solutions exist if $c>c_o$.
When it exists, the solution is
exponentially bounded, on the contrary, if $c>c_o$, there is the so-called instantaneous blowup phenomenon.

The drift term in (\ref{L Kolmogorov}) forces to use a different technique 
in order to extend these results to Kolmogorov operators.

A result analogous to that stated in  \cite{BarasGoldstein} has been obtained in 1999 by 
X. Cabr\'e and Y. Martel in \cite{CabreMartel}  for more general potentials 
$0\le V\in L_{loc}^1(\R^N)$ with a different approach.

To state existence and nonexistence results we follow the Cabr\'e-Martel's approach
using the relation between the weak solution of $(P)$
and the {\it bottom of the spectrum} of the operator $-(L+V)$
\begin{equation*}
\lambda_1(L+V):=\inf_{\varphi \in H^1_\mu\setminus \{0\}}
\left(\frac{\int_{{\mathbb R}^N}|\nabla \varphi |^2\,d\mu
-\int_{{\mathbb R}^N}V\varphi^2\,d\mu}{\int_{{\mathbb R}^N}\varphi^2\,d\mu}
\right),
\end{equation*}
with $H^1_\mu$ suitable weighted Sobolev space.

When $\mu=1$ Cabr\'e and Martel 
showed that the boundedness  of 
 $\lambda_1(\Delta+V)$ is a necessary 
and sufficient condition for the existence of positive exponentially bounded in time
solutions to the associated initial value problem. 
Later in \cite{GGR, CGRT, CPT1} similar results have been extended to Kolmogorov operators
perturbed by inverse square potentials with a single pole.
The proof uses some properties of the operator $L$ and of its corresponding semigroup
in $L_\mu^2(\R^N)$. 

In the multipolar case with $L=\Delta$ the behaviour of the operator with a multipolar inverse square potential
has been investigated in literature.
In particular if $\mathcal{L}$ is the Schr\"odinger operator
$$
\mathcal{L}=-\Delta-\sum_{i=1}^n\frac{c_i}{|x-a_i|^2},
$$
$n\ge2$, $c_i\in \R$, for any $i\in \{1,\dots, n\}$, V. Felli, E. M. Marchini and S. Terracini in
\cite{FelliMarchiniTerracini} proved that the associated quadratic form
$$
Q(\varphi):=\int_{\R^N}|\nabla \varphi |^2\,dx
-\sum_{i=1}^n c_i\int_{{\mathbb R}^N}\frac{\varphi^2}{|x-a_i|^2}\,dx
$$
is positive if $\sum_{i=1}^nc_i^+<\frac{(N-2)^2}{4}$, $c_i^+=\max\{c_i,0\}$, 
conversely if
$\sum_{i=1}^nc_i^+>\frac{(N-2)^2}{4}$ there exists a configuration of poles such that $Q$ is not positive.
Later Bosi, Dolbeaut and Esteban in \cite{BDE} proved that for any $c\in\left(0,\frac{(N-2)^2}{4}\right]$ there exists 
a positive constant $K$ such that a multipolar Hardy inequality holds.
Cazacu and Zuazua in \cite{CazacuZuazua}, improving a result stated in 
\cite{BDE}, obtained the inequality when 
$V= c\sum_{1\le i<j\le n}\frac{|a_i-a_j|^2}{ |x-a_i|^2|x-a_j|^2}$ (see also Cazacu \cite{Cazacu} 
for estimates for the Hardy constant in bounded domains).

For Ornstein-Uhlenbeck type operators 
$$
Lu=\Delta u - \sum_{i=1}^{n}A(x-a_i)\cdot \nabla u,
$$
with $A$ a positive definite real Hermitian $N\times N$ matrix, $a_i\in \R^N$, $i\in \{1,\dots , n\}$,  perturbed by multipolar
inverse square potentials (\ref{V}),
weighted multipolar Hardy inequalities
and related existence and nonexistence results were stated in \cite{CP}. 
In such a case, the invariant measure for these operators is the Gaussian measure
$d\mu =\mu_A (x) dx =\kappa e^{-\frac{1}{2}\sum_{i=1}^{n}\left\langle A(x-a_i), x-a_i\right\rangle }dx$.  

As far as we know there are no other results in the literature about the weighted multipolar Hardy inequalities.

In the paper, in Sections 2 and 3, we state multipolar weighted inequalities
\begin{equation}\label{whi}
\int_{\R^N}V\,\varphi^2\,d\mu
\leq\int_{\R^N}|\nabla \varphi |^2d\mu+
K\int_{\R^N} \varphi^2d\mu,\quad \varphi\in H^1_\mu,\qquad K>0,
\end{equation}
with $V$ as in (\ref{V}), with $0<c\le c_{o,\mu}$, 
and state the optimality of the constant on the left-hand side.

We use two different approaches to get the estimates. 
The first is based on the well known {\sl vector field method} and 
the second extends the {\sl IMS method} used in \cite {BDE} to the weighted case.

There is a close relation between
the estimate of the bottom of the spectrum $\lambda_1(L+V)$ and the weighted Hardy inequalities.
In particular the existence of positive solutions to $(P)$
is related to the Hardy inequality (\ref{whi})  and the nonexistence 
is due to the optimality of the constant $c_{o, \mu}$. 

The main difficulties to get the inequality in the multipolar case are due to the mutual interaction among the poles.
In \cite{CP} we used a technique which allowed us to overcome such difficulties in the case of
the Gaussian measure, but it does not work in the setting of more general measures.

It is not immediate to generalize the vector field method to the multipolar case.
In order to do this, we need to isolate the poles. We are able to attain the result with assumptions on the weights
which generalize in a natural way those in the unipolar case (cf. \cite{CPT1}). 
The limit of the method is that we do not achieve the best constant $c_{o,\mu}$ 
on the left hand side in the estimate.

The IMS method allows us to get the best constant. Up to now this is the unique technique which allows 
to achieve the optimal constant in the case of Lebesgue measure (cf. \cite{BDE}).
We adapt the method to the weighted case. 

The technique makes use of a weighted Hardy inequality 
with a single pole. In the weighted case
the assumptions on $\mu$ must allow us to use an unipolar estimate with the same measure. 
This is a disadvantage compared to the first method and it forces us to use
assumptions on $\mu$ which are a bit less general. 
Good weight functions $\mu$ are the ones that behave in a unipolar way near to the single pole.
We use as a suitable inequality the unipolar inequality stated in \cite{CPT1}.

A class of functions satisfying our hypotheses is shown in Section 4. 

In Section 5 we get the optimality of the constant in the estimate. 
A crucial point is to find a suitable function $\varphi$ for which the inequality
(\ref{whi}) doesn't hold if $c>c_{o,\mu}$. We present a function which involves only one pole
reasonig as in \cite{CPT1}. Furthermore we adapt the way to estimate the bottom of the spectrum
in  \cite{CGRT} to the multipolar case.

We state existence and nonexistence result in Section 6 following the Cabr\'e-Martel's approach 
and, then, using multipolar weighted inequalities. 
So we need that the unperturbed operator $L$ generates a $C_o$-semigroup. 
In the case of measures of a more general type than the Gaussian one, 
measures which could have degeneracy in one or more points, we need to require
suitable assumptions to guarantee the generation of the semigroup. 

The proof of Theorem \ref{th:cabr-mart-H1} relies on certain properties of the operator $L$ 
and of its corresponding semigroups.
We ensure that these properties hold reasoning as in \cite{CGRT}. To this aim we state
some density results.

\bigskip

\section{Weighted multipolar Hardy inequalities via the vector field method}\label{vector-field}

Let $\mu\ge 0$ be a weight function on $\R^N$.
The vector field method suggests us to consider the vectorial function
$$
F(x)=\sum_{i=1}^n \beta\, \frac{x-a_i}{|x-a_i|^2} \mu, \qquad \beta>0.
$$
Let us assume the following hypotheses  
\begin{itemize}
\item[$H_1)$] 
\begin{itemize}
\item[$i)$] $\quad \nabla \mu \in L_{loc}^1(\R^N)$;
\item[$ii)$] $\quad \sqrt{\mu}\in H^1_{loc}(\R^N)$;
\item[$iii)$]  $\quad \mu^{-1}\in L_{loc}^1(\R^N)$;
\end{itemize}
\item [$H_2)$] there exists constants $k_1,k_2\in \R$, $k_2>2-N$, such that
$$
\beta\sum_{i=1}^n\frac{(x-a_i)}{|x-a_i|^2}\cdot\nabla\mu\ge
\left( -k_1 + \sum_{i=1}^n\frac{k_2\beta}{|x-a_i|^2}\right) \mu;
$$
\end{itemize}
Let us observe that under the assumptions $ii)$ and $iii)$ in the hypothesis $H_1)$ the 
space $C_c^{\infty}(\R^N)$ is dense in $H_{\mu}^1$
and $H_{\mu}^1$ is the completion of $C_c^{\infty}(\R^N)$ with respect to the Sobolev norm
$$
\|\cdot\|_{H^1_\mu}^2 := \|\cdot\|_{L^2_\mu}^2 + \|\nabla \cdot\|_{L^2_\mu}^2
$$
(see e.g. \cite{T}).
\medskip

\begin{thm}\label{wH vector field}
Let $\displaystyle r_0=\min_{i\neq j} |a_i-a_j|/2$, $N\ge 3$, $n\ge 1$.
Under hypotheses  $H_1)$ and $H_2)$ we get
\begin{equation}\label{ineq vector field}
\begin{split}
\frac{c_o(N+k_2)}{n}\int_{{\mathbb R}^N}\sum_{i=1}^n \frac{\varphi^2 }{|x-a_i|^2}\, d\mu
\,+\, &
\frac{\beta^2}{2}\int_{\R^N}\sum_{\substack{i,j=1\\ i\ne j}}^{n}
\frac{|a_i-a_j|^2}{|x-a_i|^2|x-a_j|^2}\varphi^2\,d\mu  
\\&\le
\int_{{\mathbb R}^N} |\nabla\varphi|^2 \, d\mu +k_1\int_{{\mathbb R}^N}\varphi^2 d\mu,
\end{split}
\end{equation}
for any $ \varphi \in H_\mu^1$, where  $c_o(N+k_2):=\left(\frac{N+k_2-2}{2}\right)^2$.
\end{thm}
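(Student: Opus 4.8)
\emph{Proof proposal.} The plan is to run the classical vector field (Rellich--Pohozaev type) argument, with the divergence computed with respect to the measure $\mu$, and to optimize the parameter $\beta>0$ at the end. First I would reduce to test functions $\varphi\in C_c^\infty(\R^N\setminus\{a_1,\dots,a_n\})$: by $ii)$ and $iii)$ in $H_1)$ the space $C_c^\infty(\R^N)$ is dense in $H^1_\mu$, and since $N\ge3$ each pole $a_i$ has zero $H^1_\mu$-capacity (here the local regularity of $\mu$ granted by $H_1)$ is used), so the poles may also be removed from the test-function class. For such $\varphi$ the vector field $\varphi^2F$ is compactly supported away from the $a_i$ and, by $\nabla\mu\in L^1_{loc}(\R^N)$, its distributional divergence lies in $L^1$; hence, after a routine mollification of $\mu$, the divergence theorem gives
\[
\int_{\R^N}\varphi^2\,\operatorname{div}F\,dx \;+\; 2\int_{\R^N}\varphi\,F\cdot\nabla\varphi\,dx \;=\;0 ,
\]
with no boundary contribution.

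Next I would compute the two ingredients entering this identity. Using $\operatorname{div}\!\big(\tfrac{x-a_i}{|x-a_i|^2}\big)=\tfrac{N-2}{|x-a_i|^2}$ and the product rule,
\[
\operatorname{div}F=\beta\,\nabla\mu\cdot\sum_{i=1}^n\frac{x-a_i}{|x-a_i|^2}+\beta(N-2)\,\mu\sum_{i=1}^n\frac1{|x-a_i|^2},
\]
and $H_2)$ bounds the drift term from below, yielding $\operatorname{div}F\ge\big(-k_1+\beta(N+k_2-2)\sum_{i=1}^n\tfrac1{|x-a_i|^2}\big)\mu$ a.e. For $|F|^2$ I would expand the square and apply the polarization identity $(x-a_i)\cdot(x-a_j)=\tfrac12\big(|x-a_i|^2+|x-a_j|^2-|a_i-a_j|^2\big)$ to the off-diagonal terms, which produces exactly the negative cross-term and gives
\[
\frac{|F|^2}{\mu}=\beta^2\mu\Big(n\sum_{i=1}^n\frac1{|x-a_i|^2}-\frac12\sum_{\substack{i,j=1\\ i\ne j}}^n\frac{|a_i-a_j|^2}{|x-a_i|^2|x-a_j|^2}\Big).
\]

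Now I would apply Young's inequality $-2\varphi\,F\cdot\nabla\varphi\le\mu|\nabla\varphi|^2+\tfrac{|F|^2}{\mu}\varphi^2$, valid pointwise a.e. since $\mu^{-1}\in L^1_{loc}$ forces $\mu>0$ a.e., to the identity above, obtaining $\int\varphi^2\operatorname{div}F\,dx\le\int|\nabla\varphi|^2d\mu+\int\tfrac{|F|^2}{\mu}\varphi^2\,dx$. Inserting the lower bound for $\operatorname{div}F$ and the expression for $|F|^2/\mu$ and rearranging, the terms proportional to $\sum_i|x-a_i|^{-2}$ combine into the single coefficient $\beta(N+k_2-2)-n\beta^2$, and one arrives at \eqref{ineq vector field} with that coefficient in place of $c_o(N+k_2)/n$ and with $\beta^2/2$ as the coefficient of the cross-term. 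Since $k_2>2-N$, the quadratic $\beta\mapsto\beta(N+k_2-2)-n\beta^2$ attains its maximum at $\beta=\tfrac{N+k_2-2}{2n}>0$, with value $\tfrac1n\big(\tfrac{N+k_2-2}{2}\big)^2=c_o(N+k_2)/n$; choosing this $\beta$ (the one appearing in the statement's cross-term) gives \eqref{ineq vector field} for every $\varphi\in C_c^\infty(\R^N\setminus\{a_1,\dots,a_n\})$.

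Finally I would pass to arbitrary $\varphi\in H^1_\mu$ by approximation: taking $\varphi_k\in C_c^\infty(\R^N\setminus\{a_i\})$ with $\varphi_k\to\varphi$ in $H^1_\mu$, the right-hand side converges, while Fatou's lemma applied to the two nonnegative left-hand integrals yields the inequality for $\varphi$ (and, as a byproduct, the finiteness of the singular integrals). I expect the real difficulty to be not the algebra but precisely this density step --- showing that deleting the poles from the class of test functions costs nothing, which is where $N\ge3$ and the local hypotheses on $\mu$ in $H_1)$ are genuinely needed, and equivalently where one must check that the boundary integrals over shrinking spheres around the $a_i$ vanish in the limit.
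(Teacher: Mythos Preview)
Your proposal is correct and follows the same vector-field argument as the paper: compute $\operatorname{div}F$, integrate by parts, apply Young's inequality, expand the square via the polarization identity $(x-a_i)\cdot(x-a_j)=\tfrac12\big(|x-a_i|^2+|x-a_j|^2-|a_i-a_j|^2\big)$, invoke $H_2)$, and optimize in $\beta$. The algebra and the optimization are identical to the paper's.

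The only point of divergence is your density step. You reduce to $\varphi\in C_c^\infty(\R^N\setminus\{a_1,\dots,a_n\})$ via a capacity argument and then worry about boundary terms on shrinking spheres; the paper instead works directly with $\varphi\in C_c^\infty(\R^N)$, without excising the poles. This is legitimate because for $N\ge3$ the identity $\operatorname{div}\big(\tfrac{x-a_i}{|x-a_i|^2}\big)=\tfrac{N-2}{|x-a_i|^2}$ holds in the sense of distributions on all of $\R^N$ (no Dirac mass appears, unlike the case of $\tfrac{x}{|x|^N}$), and both sides of the divergence identity are locally integrable by $H_1)$. So the step you flag as the ``real difficulty'' is in fact avoidable: the paper's route is a bit more economical, while yours is more explicit about the role of $N\ge3$. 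Either way the argument goes through.
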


\begin{proof}

By density, it is enough to prove (\ref{ineq vector field}) for every 
$\varphi \in C_{c}^{\infty}(\R^N)$. 

It is immediate to verify that
\begin{equation}\label{def div F}
\int_{\R^N}\varphi^2 {\rm div}F \,dx=
\beta\int_{\R^N}\sum_{i=1}^n\left[\frac{N-2}{|x-a_i|^2} \mu +
\frac{(x-a_i)}{|x-a_i|^2}\cdot\nabla\mu\right]\varphi^2 dx.
\end{equation} 
On the other hand, integrating by parts and using H{\"o}lder and Young inequalities, 
we get
\begin{equation}\label{div F}
\begin{split}
\int_{\R^N}&\varphi^2 {\rm div}F \,dx=
-2\int_{\R^N}^{}\varphi F\cdot\nabla\varphi \, dx
\\&
\le 2\left[\int_{\R^N}|\nabla \varphi|^2 \, d\mu\right]^{\frac{1}{2}}
\left[\int_{\R^N}\left( \sum_{i=1}^{n} \frac{ \beta\,(x-a_i)}{|x-a_i|^2} \right)^2 
\,\varphi^2\, d\mu\right]^{\frac{1}{2}}
\\&
\le \int_{\R^N}|\nabla \varphi|^2 \, d\mu+
\int_{\R^N}\left( \sum_{i=1}^{n} \frac{\beta\,(x-a_i)}{|x-a_i|^2} \right)^2 \,\varphi^2\, d\mu.
\end{split}
\end{equation}
From (\ref{def div F}) and (\ref{div F}) we deduce
\begin{equation}\label{first part}
\begin{split}
\int_{\R^N}\sum_{i=1}^n \frac{\beta(N-2)}{|x-a_i|^2}\varphi^2d\mu
\le & \int_{\R^N}|\nabla \varphi|^2 \, d\mu
\\&+
\int_{\R^N} \sum_{i=1}^{n} \frac{\beta^2}{|x-a_i|^2}  \,\varphi^2\, d\mu
\\&
+\int_{\R^N} \sum_{\substack{i,j=1\\i\ne j}}^n\frac{ \beta^2\,(x-a_i)\cdot (x-a_j)}{|x-a_i|^2|x-a_j|^2} 
\,\varphi^2\, d\mu
\\&
-\beta
\int_{\R^N}\sum_{i=1}^n\frac{(x-a_i)}{|x-a_i|^2}\cdot\nabla\mu\,\varphi^2 dx.
\end{split}
\end{equation}
Now we observe that
\begin{equation}\label{by double product}
\begin{split}
\sum_{\substack{i,j=1\\ i\ne j}}^{n}\frac{(x-a_i)\cdot (x-a_j)}{|x-a_i|^2|x-a_j|^2}  &=
\sum_{\substack{i,j=1\\ i\ne j}}^{n}\frac{|x|^2-xa_i-xa_j+a_ia_j}
{|x-a_i|^2|x-a_j|^2}
\\&=
\sum_{\substack{i,j=1\\ i\ne j}}^{n}\frac{\frac{|x-a_i|^2}{2}+\frac{|x-a_j|^2}{2}-\frac{|a_i-a_j|^2}{2}}
{|x-a_i|^2|x-a_j|^2}
\\&=
\sum_{\substack{i,j=1\\ i\ne j}}^{n}\frac{1}{2}\left( \frac{1}{|x-a_i|^2}+\frac{1}{|x-a_j|^2}
-\frac{|a_i-a_j|^2}{|x-a_i|^2|x-a_j|^2}\right) 
\\&=
(n-1)\sum_{i=1}^{n}\frac{1}{|x-a_i|^2}
-\frac{1}{2}\sum_{\substack{i,j=1\\ i\ne j}}^{n}\frac{|a_i-a_j|^2}{|x-a_i|^2|x-a_j|^2}.
\end{split}
\end{equation}
Then, taking into account the hypothesis $H_2)$ and using (\ref{by double product}), 
from the estimate (\ref{first part}) it follows that
\begin{equation}\label{part on left side}
\begin{split}
\left[ (N+k_2-2)\beta-n\beta^2 \right]&\sum_{i=1}^{n} 
\int_{\R^N}\frac{\varphi^2}{|x-a_i|^2}\,d\mu
\\+&
\frac{\beta^2}{2}\int_{\R^N}\sum_{\substack{i,j=1\\ i\ne j}}^{n}
\frac{|a_i-a_j|^2}{|x-a_i|^2|x-a_j|^2}\varphi^2\,d\mu
\\&\le 
\int_{\R^N}|\nabla \varphi|^2\,d\mu +k_1\int_{\R^N}\varphi^2 d\mu.
\end{split}
\end{equation}
The Theorem is proved observing that
$$
\max_\beta[(N+k_2-2)\beta-n\beta^2]=\frac{(N+k_2-2)^2}{4 n}.
$$
\end{proof}

\medskip

Now our aim is to estimate the second term on the left hand side in
(\ref{part on left side}) to get a more general Hardy inequality.
From a mathematical point of view the principal problem is due to the square of the sum 
on the right-hand side in (\ref{div F}). To overcome the difficulties we 
 are able to isolate singularities but we can not achieve the constant 
$c_o(N+k_2)$.

We state the following result.

\begin{thm}\label{wH vector field 2}
Let $\displaystyle r_0=\min_{i\neq j} |a_i-a_j|/2$, $N\ge 3$, $n\ge 1$.
Then 
if conditions $H_1)$ and $H_2)$ hold, we get
\begin{equation}\label{ineq vector field 2}
c\int_{{\mathbb R}^N}\sum_{i=1}^n \frac{\varphi^2 }{|x-a_i|^2}\, d\mu\le  
\int_{{\mathbb R}^N} |\nabla\varphi|^2 \, d\mu 
+K \int_{\R^N}\varphi^2 \, d\mu
\end{equation}
for any $ \varphi \in H^1_\mu$, where $c\in\left]0,c_o(N+k_2)\right[$,
$c_o(N+k_2)=\left(\frac{N+k_2-2}{2}\right)^2$, and
$K=K(n, c, r_0)$.
\end{thm}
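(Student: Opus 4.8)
The plan is to \emph{isolate the poles} and reduce the estimate to the one-pole vector field computation already carried out in the proof of Theorem \ref{wH vector field}. Fix $c\in\;]0,c_o(N+k_2)[$. By $H_1)$ the space $C_c^\infty(\R^N)$ is dense in $H^1_\mu$, so it suffices to prove (\ref{ineq vector field 2}) for $\varphi\in C_c^\infty(\R^N)$. Since $r_0=\min_{i\ne j}|a_i-a_j|/2$, the balls $B(a_i,r_0)$ are pairwise disjoint; splitting each $\int_{\R^N}|x-a_i|^{-2}\varphi^2\,d\mu$ over $B(a_i,r_0/2)$ and its complement, and using $|x-a_i|\ge r_0/2$ off $B(a_i,r_0/2)$, the ``far'' parts are $\le (4n/r_0^2)\int_{\R^N}\varphi^2\,d\mu$ and may be absorbed into $K\int_{\R^N}\varphi^2\,d\mu$. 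It therefore remains to control $\int_{B(a_i,r_0/2)}|x-a_i|^{-2}\varphi^2\,d\mu$ for each $i$.

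For each $i$ I would fix a cutoff $\vartheta_i\in C_c^\infty(B(a_i,r_0))$ with $0\le\vartheta_i\le1$, $\vartheta_i\equiv1$ on $B(a_i,r_0/2)$ and $\|\nabla\vartheta_i\|_\infty\le C/r_0$, and apply the vector field method to $\psi_i:=\vartheta_i\varphi\in C_c^\infty(\R^N)$ with the \emph{single-pole} field $F=\beta(x-a_i)|x-a_i|^{-2}\mu$. Since $\psi_i$ is supported in $B(a_i,r_0)$, where $F$ has only one singularity, the Young step of (\ref{div F}) produces just $\beta^2\int_{\R^N}|x-a_i|^{-2}\psi_i^2\,d\mu$ (no ``square of a sum''), and, exactly as in (\ref{first part}),
\[
[(N-2)\beta-\beta^2]\int_{\R^N}\frac{\psi_i^2}{|x-a_i|^2}\,d\mu+\beta\int_{\R^N}\frac{(x-a_i)}{|x-a_i|^2}\cdot\nabla\mu\,\psi_i^2\,dx\le\int_{\R^N}|\nabla\psi_i|^2\,d\mu .
\]
The drift term is then treated by $H_2)$ localized to $B(a_i,r_0)$: writing $\frac{x-a_i}{|x-a_i|^2}\cdot\nabla\mu=\sum_{j=1}^n\frac{x-a_j}{|x-a_j|^2}\cdot\nabla\mu-\sum_{j\ne i}\frac{x-a_j}{|x-a_j|^2}\cdot\nabla\mu$, the first sum is bounded below by $H_2)$ and, since $|x-a_j|\ge r_0$ on $\supp\psi_i$ for $j\ne i$, it contributes $k_2\beta\int_{\R^N}|x-a_i|^{-2}\psi_i^2\,d\mu$ up to a bounded error; in the second sum $\nabla\mu$ does not act at $a_i$, so one integrates by parts (moving $\nabla$ onto the smooth, compactly supported vector field $\sum_{j\ne i}(x-a_j)|x-a_j|^{-2}\psi_i^2$, which is legitimate as $\mu,\nabla\mu\in L^1_{loc}(\R^N)$), and a Young inequality bounds it below by $-\eta\int_{\R^N}|\nabla\psi_i|^2\,d\mu$ minus a bounded multiple of $\int_{\R^N}\psi_i^2\,d\mu$, with $\eta>0$ at our disposal.

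Combining these with $\psi_i=\varphi$ on $B(a_i,r_0/2)$, with $|\nabla\psi_i|^2\le(1+\eta)\vartheta_i^2|\nabla\varphi|^2+C_\eta|\nabla\vartheta_i|^2\varphi^2$ and $\|\nabla\vartheta_i\|_\infty\le C/r_0$, and taking $\beta\in\;]0,N+k_2-2[$ (so the bracket below is nonnegative; recall $k_2>2-N$), I obtain
\[
[(N+k_2-2)\beta-\beta^2]\int_{B(a_i,r_0/2)}\frac{\varphi^2}{|x-a_i|^2}\,d\mu\le(1+\eta)^2\int_{\R^N}\vartheta_i^2|\nabla\varphi|^2\,d\mu+C(n,r_0,\eta,\beta)\int_{\R^N}\varphi^2\,d\mu .
\]
Now I would sum over $i=1,\dots,n$: the $\vartheta_i$ have pairwise disjoint supports, hence $\sum_i\vartheta_i^2\le1$ and $\sum_i\int_{\R^N}\vartheta_i^2|\nabla\varphi|^2\,d\mu\le\int_{\R^N}|\nabla\varphi|^2\,d\mu$; adding back the ``far'' terms gives
\[
[(N+k_2-2)\beta-\beta^2]\sum_{i=1}^n\int_{\R^N}\frac{\varphi^2}{|x-a_i|^2}\,d\mu\le(1+\eta)^2\int_{\R^N}|\nabla\varphi|^2\,d\mu+K'\int_{\R^N}\varphi^2\,d\mu .
\]
Since $\max_\beta[(N+k_2-2)\beta-\beta^2]=\bigl(\tfrac{N+k_2-2}{2}\bigr)^2=c_o(N+k_2)>c$, I may take $\beta$ close to $(N+k_2-2)/2$ and then $\eta$ small enough that $[(N+k_2-2)\beta-\beta^2]\ge c\,(1+\eta)^2$; dividing by $(1+\eta)^2$ yields (\ref{ineq vector field 2}) with $K=K'/(1+\eta)^2=K(n,c,r_0)$.

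The place where the multipolar structure genuinely bites is the Young step in (\ref{div F}): the square of the sum $\bigl(\sum_i\beta(x-a_i)|x-a_i|^{-2}\bigr)^2$ would cost a factor $n$ that cannot be afforded if a constant close to $c_o(N+k_2)$ is wanted, so the cross terms must be removed by localization. The delicate part is to do this \emph{without} reintroducing singularities: in particular, to control $\sum_{j\ne i}(x-a_j)|x-a_j|^{-2}\cdot\nabla\mu$ on $B(a_i,r_0)$ although $\nabla\mu$ is merely $L^1_{loc}$ (hence the integration by parts above), and to check that the gradient-of-cutoff errors remain bounded. It is precisely the losses incurred in these Young and cutoff steps that prevent reaching the endpoint $c=c_o(N+k_2)$ and force the strict inequality in the statement.
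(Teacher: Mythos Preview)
Your argument is correct and reaches the same conclusion, but by a genuinely different route from the paper's. The paper keeps the \emph{global} multipolar field $F=\sum_i\beta(x-a_i)|x-a_i|^{-2}\mu$ and confronts the cross terms in $\bigl(\sum_i\beta(x-a_i)|x-a_i|^{-2}\bigr)^2$ directly: it splits $\R^N$ into $\bigcup_kB(a_k,r_0)$ and its complement, bounds the cross terms outside by $\beta^2n(n-1)r_0^{-2}\int\varphi^2\,d\mu$, and inside each $B(a_k,r_0)$ controls the mixed products $(x-a_k)\cdot(x-a_j)/|x-a_k|^2|x-a_j|^2$ via a Young inequality with parameter $\epsilon$; after applying $H_2)$ globally this yields the coefficient $(N+k_2-2)\beta-\beta^2(1+\epsilon/2)$, whose maximum over $\beta$ is $c_o(N+k_2)/(1+\epsilon/2)$, giving every $c<c_o(N+k_2)$ with an explicit $K=\beta^2r_0^{-2}(n-1)\bigl(n-1+\tfrac{1}{2\epsilon}\bigr)+k_1$. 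You instead \emph{localize first} with cutoffs $\vartheta_i$ and run the single-pole vector field on each $\psi_i=\vartheta_i\varphi$, so no cross terms ever appear; the price is that the drift produced is $\beta\frac{x-a_i}{|x-a_i|^2}\cdot\nabla\mu$ rather than the full sum to which $H_2)$ applies, and you recover it by subtracting the $j\ne i$ contributions and integrating those by parts (legitimate since $\mu\in W^{1,1}_{loc}$ and the vector field is smooth on $\supp\psi_i$). Both schemes lose the endpoint through a Young parameter ($\epsilon$ for the paper, $\eta$ for you). The paper's computation is more self-contained and delivers an explicit $K$; your approach is more modular---it truly reduces the multipolar estimate to $n$ disjoint unipolar ones---at the cost of the additional integration-by-parts step for the drift correction and a less explicit constant.
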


\begin{proof}

Arguing as in the proof of Theorem \ref{wH vector field} (cf. (\ref{first part})) we get 
\begin{equation}\label{I2345}
\begin{split}
\int_{\R^N}\sum_{i=1}^n &\frac{\beta(N-2)}{|x-a_i|^2}\varphi^2d\mu
\le \int_{\R^N}|\nabla \varphi|^2 \, d\mu
\\&+
\int_{\R^N} \sum_{i=1}^{n} \frac{\beta^2}{|x-a_i|^2}  \,\varphi^2\, d\mu
\\&+
\int_{\R^N\setminus \Uk} \sum_{\substack{i,j\\i\ne j}}
\frac{ \beta^2\,(x-a_i)\cdot (x-a_j)}{|x-a_i|^2|x-a_j|^2} \,\varphi^2\, d\mu
\\&
+\int_{\Uk}\sum_{\substack{i,j\\i\ne j}}
\frac{ \beta^2\,(x-a_i)\cdot (x-a_j)}{|x-a_i|^2|x-a_j|^2}  \,\varphi^2\, d\mu
\\&-
\beta
\int_{\R^N}\sum_{i=1}^n\frac{(x-a_i)}{|x-a_i|^2}\cdot\nabla\mu\,\varphi^2 dx
\\&
=:I_1+I_2+I_3+I_4+I_5,
\end{split}
\end{equation}
where $B(a_k,r_0)$, $k=1,\dots, n$, denotes the open ball of $\R^N$ of radius $r_0$ centered at $a_k$.

Let us estimate $I_3$ and $I_4$. The first integral can be estimate as follows 

\begin{equation}\label{I3} 
I_3\le
\frac{ \beta^2}{r_0^2} n(n-1)\int_{\R^N\setminus \Uk}\varphi^2 \, d\mu.
\end{equation}

For the second integral we isolate the singularities and then, using again Young inequality, we get
\begin{equation}\label{I4}
\begin{split}
I_4 & \le 
\sum_{k=1}^{n} \Biggl(
\int_{B(a_k,r_0)}\sum_{\substack{j=1\\j\ne k}}^n\frac{\beta^2}{|x-a_k||x-a_j|} \,\varphi^2\, d\mu+
\\&\quad +
\int_{B(a_k,r_0)}\sum_{\substack{i,j=1\\j\ne i\ne k}}^n\frac{\beta^2}{|x-a_i||x-a_j|} 
\,\varphi^2\, d\mu\Biggr)
\\&
\le\sum_{k=1}^{n}\Biggl\{
\frac{\epsilon}{2}
\int_{B(a_k,r_0)}\frac{\beta^2}{|x-a_k|^2}\,\varphi^2 \, d\mu+
\\&\quad +
\frac{1}{2\epsilon}
\int_{B(a_k,r_0)}\sum_{\substack{j=1\\j\ne k}}^n
\frac{\beta^2}{|x-a_j|^2}\,\varphi^2\, d\mu
\\&\quad +
\frac{\beta^2}{r_0^2}(n-1)^2\int_{B(a_k,r_0)}\varphi^2 \, d\mu\Biggr\}
\\&\le
\sum_{k=1}^{n}\Biggl\{
\frac{\epsilon}{2}
\int_{B(a_k,r_0)}\frac{\beta^2}{|x-a_k|^2}\,\varphi^2\, d\mu+
\\&
\quad +\Biggl[\frac{\beta^2(n-1)}{2\epsilon\, r_0^2}+\frac{\beta^2(n-1)^2}{r_0^2}\Biggr]
\int_{B(a_k,r_0)}\varphi^2 \, d\mu\Biggr\}
\\&\le
\sum_{k=1}^{n}\Biggl\{
\frac{\epsilon}{2}
\int_{B(a_k,r_0)}\sum_{i=1}^n\frac{\beta^2}{|x-a_i|^2}\,\varphi^2\, d\mu+
\\&
\quad +\frac{\beta^2(n-1)}{ r_0^2}\biggl[\frac{1}{2\epsilon}+
(n-1)\Biggr]
\int_{B(a_k,r_0)}\varphi^2 \, d\mu\biggr\}.
\end{split}
\end{equation}
The integral $I_5$ can be estimate applying $H_2)$.

Taking into account (\ref{I2345}) and using (\ref{I3}), (\ref{I4}) we deduce that
\begin{equation}\label{c alpha}
\begin{split}
\int_{\R^N}\sum_{i=1}^{n}&
\frac{\beta(N+k_2-2)  -\beta^2(1+\frac{\epsilon}{2})}{|x-a_i|^2}\varphi^2 \, d\mu
\\&\le
\int_{\R^N}^{}|\nabla \varphi|^2 d\mu 
+K \int_{\R^N} \varphi^2\, d\mu,
\end{split}
\end{equation}
where 
$$K=\frac{\beta^2}{r_0^2}(n-1)\left(n-1+\frac{1}{2\epsilon}\right)+k_1.
$$
The maximum of the function $\beta\mapsto (N+k_2-2)\beta-\beta^2(1+\frac{\epsilon}{2})$ 
is $\frac{c_o(N+k_2)}{1+\frac{\epsilon}{2}}$
attained in $\beta_{max}=\frac{\sqrt{c_o(N+k_2)}}{1+\frac{\epsilon}{2}}$.
So, if we set 
\begin{equation}\label {c alpha}
c=(N+k_2-2)\beta-\beta^2\left(1+\frac{\epsilon}{2}\right)
\end{equation}
we deduce from (\ref{c alpha}) that for $c\in\left(0,\frac{c_o(N+k_2)}{1+\frac{\epsilon}{2}}\right]$, 
for any $\epsilon>0$, it holds
\begin{equation*}
c\int_{\R^N}\sum_{i=1}^n \frac{\varphi^2}{|x-a_i|^2}\,d\mu
\le \int_{\R^N} |\nabla\varphi|^2\,d\mu +
K\int_{\R^N}\varphi^2 d\mu.
\end{equation*}
The relation (\ref{c alpha}) between $\beta$ and $c$ allow us to write $\beta$ in the following form
$$
\beta_{\epsilon}^{\pm}=\frac{\sqrt{c_o(N+k_2)}\pm
\sqrt{c_o(N+k_2)-c[1+\frac{\epsilon}{2}]}}{1+\frac{\epsilon}{2}}.
$$
\end{proof}

\bigskip

\section{Weighted multipolar Hardy inequalities via the IMS method}\label{ims}

In this Section we state the weighted multipolar Hardy inequality using the so-called 
IMS truncation method (for Ismagilov, Morgan, Morgan-Simon, Sigal, see \cite{Morgan, SimonIMS}), 
which consists 
in localizing the wave functions around the singularities by using a partition of unity. This method, 
unlike the vector field one, allows us to achieve the constant on the left-hand side in the 
inequality.

We argue as in \cite{BDE} adapting the proof to the weighted case. 

\medskip

The hypotheses on the weight functions $\mu$ are $H_1)$ 
in Section \ref{vector-field} and the following

\begin{itemize}

\item[$H_2')$] there exist constants $k_1, k_2\in \R$, $k_2>2-N$, such that if
$$
f_{\varepsilon, i}=(\varepsilon +|x-a_i|^{2})^{\frac{\alpha}{2}}, 
 \quad \alpha < 0, \quad \varepsilon >0,
$$
it holds
\begin{equation*} 
\frac{\nabla f_{\varepsilon, i}}{f_{\varepsilon, i}} \cdot \nabla\mu =
\frac{\alpha(x-a_i)}{\varepsilon +|x-a_i|^{2}}\cdot \nabla\mu 
\le \left( k_1 
+ \frac{ k_2\alpha}{\varepsilon +|x-a_i|^2}\right) \mu
\quad {\rm in} \,\,  B(a_i, r_0)
\end{equation*}
for any $i=1,\dots,n $, and for any $\varepsilon >0$.
\end{itemize}
Under these conditions the weighted unipolar Hardy inequality stated in \cite{CPT1} holds with 
respect to any single pole 
$a_i$, $i=1,\dots,n$,
\begin{equation}\label{wHi-c_i}
c \int_{\R^N} \frac{\varphi^2}{|x-a_i|^2}\,d\mu \le \int_{\R^N}|\nabla \varphi|^2\,d\mu 
+k_1\int_{\R^N}\varphi^2\,d\mu,
\end{equation}
for any function $\varphi \in H_{\mu}^1$, where $c\in(0,c_o(N+k_2)]$ with
$c_o(N+k_2)=\left( \frac{N+k_2-2}{2}\right)^2 $.
Such an estimate plays a fundamental role in the proof of the multipolar Hardy inequality. 

The statement of our inequality is the
following.

\medskip

\begin{thm}\label{Hardy via IMS}
Assume hypotheses $H_1)$ and $H_2')$.   
Let $N\ge 3$, $n\ge 2$ and $\displaystyle r_0=\min_{i\neq j} |a_i-a_j|/2$, $i,j=1,\dots,n$. 
Then there exists
a constant $k_0\in [0, \pi^2)$ such that
\begin{equation}\label{MwhiGeneral}
\begin{split}
c\int_{{\R}^N}\sum_{i=1}^n\frac{\varphi^2 }{|x-a_i|^2}\, d\mu \le &  
\int_{{\R}^N} |\nabla\varphi|^2 \, d\mu 
\\&+
 \left[\frac{k_0+(n+1)c}{ r_0^2}+k_1\right] \int_{\R^N}\varphi^2 \, d\mu ,
\end{split}
\end{equation}
for all $\varphi \in H^1_\mu$, where $c\in (0, c_o(N+k_2)]$ with
$c_o(N+k_2)=\left( \frac{N+k_2-2}{2}\right)^2 $.
\end{thm}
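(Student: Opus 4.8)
The plan is to adapt the IMS truncation argument of \cite{BDE} to the measure $d\mu$. By the density of $C_c^\infty(\R^N)$ in $H^1_\mu$, which holds under items $ii)$ and $iii)$ of $H_1)$ as recalled just before Theorem \ref{wH vector field}, it is enough to establish (\ref{MwhiGeneral}) for $\varphi\in C_c^\infty(\R^N)$.

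First I would fix a smooth partition of unity $\phi_0,\phi_1,\dots,\phi_n$ on $\R^N$ with $\sum_{j=0}^n\phi_j^2\equiv1$, where for $i=1,\dots,n$ the function $\phi_i$ is radial about $a_i$, identically $1$ near $a_i$, and supported in $B(a_i,r_0)$ (the balls $B(a_i,r_0)$ being pairwise disjoint by the definition of $r_0$), while $\phi_0=(1-\sum_{i=1}^n\phi_i^2)^{1/2}$ vanishes in a neighbourhood of each pole. Choosing the radial profiles of the $\phi_i$ of cosine type, as in \cite{BDE}, one arranges that $\sum_{j=0}^n|\nabla\phi_j|^2\le k_0/r_0^2$ with $k_0\in[0,\pi^2)$; the precise value of $k_0$ and of the inner radii on which $\phi_i\equiv1$ is dictated by the trade-off described in the last paragraph.

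The heart of the matter is the weighted IMS identity. Since $\sum_j\phi_j^2\equiv1$ one has $\sum_{j=0}^n\phi_j\nabla\phi_j=0$, so pointwise $\sum_{j=0}^n|\nabla(\phi_j\varphi)|^2=|\nabla\varphi|^2+\varphi^2\sum_{j=0}^n|\nabla\phi_j|^2$; multiplying by $\mu$, integrating, and discarding the nonnegative term $\int_{\R^N}|\nabla(\phi_0\varphi)|^2\,d\mu$ yields
\begin{equation*}
\sum_{j=1}^n\int_{\R^N}|\nabla(\phi_j\varphi)|^2\,d\mu\le\int_{\R^N}|\nabla\varphi|^2\,d\mu+\frac{k_0}{r_0^2}\int_{\R^N}\varphi^2\,d\mu.
\end{equation*}
Next, using $\sum_j\phi_j^2\equiv1$ again, split
\begin{equation*}
c\int_{\R^N}\sum_{i=1}^n\frac{\varphi^2}{|x-a_i|^2}\,d\mu=\sum_{j=0}^n c\sum_{i=1}^n\int_{\R^N}\frac{(\phi_j\varphi)^2}{|x-a_i|^2}\,d\mu.
\end{equation*}
For each $j\ge1$ the diagonal term $i=j$ is absorbed by the weighted unipolar Hardy inequality (\ref{wHi-c_i}) with pole $a_j$, applied to $\phi_j\varphi\in H^1_\mu$: this is precisely where $H_2')$ enters, since it guarantees that (\ref{wHi-c_i}) holds for this very $\mu$ near $a_j$ with constant $c\le c_o(N+k_2)$. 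Every remaining term — the off-diagonal ones $i\ne j$ with $j\ge1$, and all terms in the $j=0$ block — is estimated trivially: on $\supp(\phi_j\varphi)\subset B(a_j,r_0)$ one has $|x-a_i|\ge r_0$ for $i\ne j$, and on $\supp(\phi_0\varphi)$ every $|x-a_i|$ is bounded below by a fixed fraction of $r_0$, so $\sum_i\frac1{|x-a_i|^2}$ is bounded by $C/r_0^2$ there. Summing over $j$, combining with the first display, and recombining the $L^2_\mu$-norms of the $\phi_j\varphi$ into $\|\varphi\|_{L^2_\mu}^2$ via $\sum_j\phi_j^2\equiv1$, one reaches (\ref{MwhiGeneral}) with the asserted constant $\frac{k_0+(n+1)c}{r_0^2}+k_1$ (up to replacing $k_1$ by $\max\{k_1,0\}$, which is harmless).

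I expect the genuine difficulty to lie not in the above scheme but in the quantitative choice of the partition of unity: one must pick the inner radii and the cut-off profiles so that, simultaneously, the gradient constant $k_0$ stays below $\pi^2$ and the crude lower bounds for $|x-a_i|$ on the supports of $\phi_0$ and of the $\phi_j\varphi$ are good enough to collapse all the error terms into exactly the coefficient $(n+1)c/r_0^2$. One must also make sure that the only truly singular contributions are the diagonal ones $i=j$, so that a single application of the unipolar estimate (\ref{wHi-c_i}) per pole suffices — this is what forces the local form of hypothesis $H_2')$, namely that $\mu$ "behaves unipolarly" near each $a_i$. The remaining checks (that $\phi_j\varphi\in H^1_\mu$ and that the integrations are legitimate) are routine under $H_1)$.
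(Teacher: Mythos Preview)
Your overall scheme is exactly the paper's: IMS localization to decouple the poles, then the weighted unipolar inequality (\ref{wHi-c_i}) on each localized piece, with the off-diagonal and outside contributions estimated by $|x-a_i|\ge r_0$. However, the quantitative part of your argument has a genuine gap, and it is precisely where you yourself flag the difficulty.

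You claim that with cosine-type profiles one can arrange $\sum_{j=0}^n|\nabla\phi_j|^2\le k_0/r_0^2$ with $k_0<\pi^2$, and then bound the residual potential $cV_n\phi_0^2$ separately using $|x-a_i|\ge r_0/2$ on $\supp\phi_0$. Neither step delivers the stated constants. For the profile $J(t)=\sin(\pi t)$ on $[1/2,1]$ one computes, on each annulus $B(a_i,r_0)\setminus B(a_i,r_0/2)$, that $\sum_j|\nabla\phi_j|^2=\dfrac{|\nabla\phi_i|^2}{1-\phi_i^2}=\dfrac{\pi^2}{r_0^2}$ \emph{exactly}, so no choice of this kind yields $k_0<\pi^2$ from the gradient alone. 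And on the same annulus the crude bound $|x-a_i|\ge r_0/2$ gives $c\phi_0^2/|x-a_i|^2\le 4c/r_0^2$, leading to a coefficient strictly larger than $(n+1)c$; in fact $\cos^2(\pi t)/t^2$ exceeds $1$ for $t$ near $1$, so even using the $\phi_0^2$ factor does not bring it down to $c/r_0^2$.

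What the paper does instead is invoke Lemma~\ref{lemma3BDE} (the two-pole lemma from \cite{BDE}), which bounds the \emph{combination}
\[
\frac{|\nabla J_i|^2}{1-J_i^2}+c\,(1-J_i^2)\,V_2(x)\ \le\ \frac{k_0+2c}{r_0^2},\qquad k_0\in[0,\pi^2),
\]
pointwise on $B(a_i,r_0)$. This joint estimate is the mechanism that produces both the strict inequality $k_0<\pi^2$ and, after adding the far-pole contributions $(n-2)c(1-J_i^2)/r_0^2$ and the unipolar remainder $(n-1)cJ_i^2/r_0^2$, the collapse $k_0+2c+(n-2)c(1-J_i^2)+(n-1)cJ_i^2=k_0+nc+cJ_i^2\le k_0+(n+1)c$. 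Separating the gradient term from the residual potential, as you do, loses exactly this interaction; no amount of tuning the inner radius alone recovers it. The fix is to replace your two separate bounds by the single joint bound of Lemma~\ref{lemma3BDE}.
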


In order to prove the Theorem via the IMS method, we need to recall the notion of partition of unity 
and some related lemmas. 

\medskip

We say that a finite family
$\left\lbrace J_i \right\rbrace_{i=1}^{n+1} $ 
of real valued functions
$J_i\in W^{1,\infty}(\R^N)$ 
is a \textit{partition of unity} in $\R^N$ if $\sum_{i=1}^{n+1}J_i^2=1$.
Furthermore we require that
\begin{equation}\label{omega}
\Omega_i\cap\Omega_j=\emptyset \quad\text{for any } i,j= 1,\dots,n ,\, i\neq j,
\end{equation}
where $\overline{\Omega}_i={\rm supp}(J_i)$, $ i=  1,\dots,n $.

Any family of this type has the following properties:
\begin{enumerate}
\item[a)] $\sum_{i=1}^{n+1}J_i\partial_\alpha J_i=0$ for any $\alpha= 1,\dots,N $;
\item[b)] $J_{n+1}=\sqrt{1-\sum_{i=1}^{n}J_i^2}$;
\item[c)] $\sum_{i=1}^{n+1}|\nabla J_i|^2\in L^{\infty}(\R^N)$;
\item[d)] $\sum_{i=1}^{n+1}|\nabla J_i|^2=\sum_{i=1}^{n}\frac{|\nabla J_i|^2}{1-J_i^2}$.
\end{enumerate}
Note that to avoid a singularity for the gradient of $J_{n+1}$ at the points where 
$1-J_i^2=0$, from d) we shall assume
the additional constraint $|\nabla J_i|^2=F(x)(1-J_i^2)$, for $i= 1,\dots,n $ 
and for some $F\in L^{\infty}(\R^N)$.

\medskip

By proceeding as in \cite[Lemma 2]{BDE}, 
we are able to state the following result.

\medskip

\begin{lemma}\label{lemma2BDE}
Let $\left\lbrace J_i \right\rbrace_{i=1}^{n+1} $ be a partition of unity satisfying (\ref{omega}). 
For any $\varphi\in H^1_\mu$ and any $V\in L^1_{loc}(\R^N)$ we get
\begin{equation*}
\begin{split}
\int_{\R^N}\left( |\nabla \varphi|^2 -V\varphi^2 \right)d\mu = &
 \sum_{i=1}^{n+1}\int_{\R^N}( |\nabla (J_i \varphi)|^2 - V(J_i \varphi)^2 ) d\mu\\
 & -\int_{\R^N}\sum_{i=1}^{n+1} |\nabla J_i|^2\varphi^2\,d\mu.
\end{split}
\end{equation*}
\end{lemma}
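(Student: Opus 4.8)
The plan is to establish the pointwise algebraic identity
$$
|\nabla\varphi|^2 = \sum_{i=1}^{n+1}|\nabla(J_i\varphi)|^2 - \sum_{i=1}^{n+1}|\nabla J_i|^2\,\varphi^2
$$
valid a.e.\ on $\R^N$, and then integrate it against $d\mu$, absorbing the potential term trivially since $\sum_{i=1}^{n+1}J_i^2=1$ forces $\sum_{i=1}^{n+1}(J_i\varphi)^2 = \varphi^2$, so that $\sum_{i=1}^{n+1}V(J_i\varphi)^2 = V\varphi^2$ pointwise. The whole content of the lemma is therefore the identity for the gradient terms, and once that is in hand the stated equality follows by linearity of the integral, provided each piece is integrable.

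The key computation is to expand $\nabla(J_i\varphi) = \varphi\,\nabla J_i + J_i\,\nabla\varphi$, so that
$$
|\nabla(J_i\varphi)|^2 = \varphi^2|\nabla J_i|^2 + 2 J_i\varphi\,\nabla J_i\cdot\nabla\varphi + J_i^2|\nabla\varphi|^2 .
$$
Summing over $i = 1,\dots,n+1$ and using $\sum_i J_i^2 = 1$ for the last term, $\sum_i J_i\,\nabla J_i = 0$ (property a) in the list above) for the cross term, and leaving the first term as is, I obtain
$$
\sum_{i=1}^{n+1}|\nabla(J_i\varphi)|^2 = \varphi^2\sum_{i=1}^{n+1}|\nabla J_i|^2 + |\nabla\varphi|^2,
$$
which rearranges to exactly the desired identity. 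The vanishing of the cross term is the one genuinely structural input, and it is immediate by differentiating the constraint $\sum_i J_i^2 = 1$.

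The main obstacle, such as it is, is purely a matter of regularity bookkeeping: one must check that all the integrals appearing in the statement are finite so that the rearrangement of the integrated identity is legitimate. Since $J_i \in W^{1,\infty}(\R^N)$ with $\sum_{i=1}^{n+1}|\nabla J_i|^2 \in L^\infty(\R^N)$ (property c), and $\varphi \in H^1_\mu$, we have $J_i\varphi \in H^1_\mu$ with $\nabla(J_i\varphi) = \varphi\,\nabla J_i + J_i\,\nabla\varphi \in L^2_\mu$; moreover $\int_{\R^N}\sum_i|\nabla J_i|^2\varphi^2\,d\mu \le \|\sum_i|\nabla J_i|^2\|_\infty\|\varphi\|_{L^2_\mu}^2 < \infty$, and $\int_{\R^N}V\varphi^2\,d\mu$ is finite (or the statement is read in $[0,+\infty]$) since $V \in L^1_{loc}$ and the identity is first proved for $\varphi \in C_c^\infty(\R^N)$, then extended by the density of $C_c^\infty(\R^N)$ in $H^1_\mu$ guaranteed under $H_1)$. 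Thus I would first prove the identity for $\varphi \in C_c^\infty$, where every term is manifestly integrable and the Leibniz rule is classical, and then pass to general $\varphi \in H^1_\mu$ by approximation, using the continuity of each bilinear form in the $H^1_\mu$-norm.
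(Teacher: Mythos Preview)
Your proof is correct and follows essentially the same route as the paper's: expand $|\nabla(J_i\varphi)|^2$ via the Leibniz rule, sum over $i$, use $\sum_i J_i^2=1$ and property~a) to kill the cross term, and handle the potential term trivially via $\sum_i J_i^2=1$. You add a more careful discussion of integrability and the passage from $C_c^\infty$ to $H^1_\mu$ by density, which the paper leaves implicit, but the core argument is identical.
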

\begin{proof}
We can immediately observe that
\begin{equation}\label{primo pezzo}
\begin{split}
\int_{\R^N} V \left( \sum_{i=1}^{n+1}(J_i\varphi)^2\right)\,d\mu
=&\int_{\R^N} V \left( \sum_{i=1}^{n+1}J_i^2\right)
\varphi^2\,d\mu \\
=&\int_{\R^N} V\varphi^2\,d\mu.
\end{split}
\end{equation}
On the other hand,
\begin{equation}\label{calcoloNabla}
\begin{split}
\sum_{i=1}^{n+1}|\nabla \left(J_i\varphi \right)|^2 &=\sum_{i=1}^{n+1}|(\nabla J_i)\varphi 
+ (\nabla \varphi)J_i|^2\\
&=\sum_{i=1}^{n+1}|\nabla J_i|^2\varphi^2
+\sum_{i=1}^{n+1}|\nabla \varphi|^2J_i^2\\
&\quad +2\sum_{i=1}^{n+1}(J_i\nabla J_i) 
(\varphi \nabla \varphi) \\
&=\sum_{i=1}^{n+1}|\nabla J_i|^2\varphi^2+|\nabla \varphi|^2
+\left( \sum_{i=1}^{n+1}J_i\nabla J_i \right) \nabla \varphi^2.
\end{split}
\end{equation}
By property a) it follows that $\left( \sum_{i=1}^{n+1}J_i\nabla J_i \right) \nabla \varphi^2=0$, 
then by integrating 
(\ref{calcoloNabla}) on $\R^N$ we obtain
\begin{equation}\label{secondo pezzo}
\int_{\R^N}|\nabla \varphi|^2\,d\mu
=\int_{\R^N}\sum_{i=1}^{n+1}|\nabla \left(J_i\varphi \right)|^2\,d\mu-\int_{\R^N} 
\sum_{i=1}^{n+1} |\nabla J_i|^2 \varphi^2\,d\mu.
\end{equation}
From (\ref{primo pezzo}) and (\ref{secondo pezzo}) we get the result.
\end{proof}

\medskip

In the following we set 
$$V_n(x)=\sum_{i=1}^{n}\frac{1}{|x-a_i|^2}.$$ 
We recall a preliminary Lemma, stated in \cite{BDE}, about the case $n=2$, 
with $a_1=a$, $a_2=-a$ and $0<r_0\le |a|$.
\begin{lemma}\label{lemma3BDE}
There is a partition of the unity $\left\lbrace J_i \right\rbrace_{i=1}^3$ 
satisfying (\ref{omega}) with $J_1\equiv 1$ on 
$B(a,\frac{r_0}{2})$, $J_1\equiv 0$ on $B(a,r_0)^c$, $J_2(x)=J_1(-x)$ for any 
$x\in \R^N$, $0<r_0\le |a|$, such that, for any $c>0$,
 there exists a constant $k_0\in \left[ 0, \pi^2 \right) $ for which, almost 
 everywhere for all $x\in \Omega:=
{\rm supp}(J_1)\cup{\rm supp}(J_2)$, we have 
\begin{equation}
\sum_{i=1}^{3}|\nabla J_i|^2 + c\,J_3^2\,V_2(x)
=\sum_{i=1,2} \frac{|\nabla J_i|^2}{1-J_i^2}+ 
c\,J_3^2\,V_2(x)\le \frac{k_0+2c}{r_0^2}.
\end{equation}
\end{lemma}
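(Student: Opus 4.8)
The plan is to build $J_{1}$ as a radial profile about $a$, set $J_{2}(x):=J_{1}(-x)$ and $J_{3}:=\sqrt{1-J_{1}^{2}-J_{2}^{2}}$, and reduce the pointwise estimate to a one--dimensional problem by symmetry. Write $J_{1}(x)=g(|x-a|)$ with $g\equiv 1$ on $[0,r_{0}/2]$ and $g\equiv 0$ on $[r_{0},\infty)$, so that $\supp(J_{1})=\overline{B(a,r_{0})}$ and $\supp(J_{2})=\overline{B(-a,r_{0})}$; since $0<r_{0}\le|a|$ the sets $\Omega_{1}=B(a,r_{0})$ and $\Omega_{2}=B(-a,r_{0})$ (centres at distance $2|a|\ge 2r_{0}$, each radius $r_{0}$) are disjoint, which is exactly \eqref{omega}, and $\{J_{i}\}_{i=1}^{3}$ is a partition of unity once $g$ is Lipschitz. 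The first equality in the statement is property d) for $n=2$. Because $V_{2}$ and the whole configuration are invariant under $x\mapsto -x$, it suffices to prove the bound on $\supp(J_{1})$; there $J_{2}\equiv 0$, hence $\nabla J_{2}=0$ and $J_{3}^{2}=1-J_{1}^{2}$, so the left--hand side equals $\frac{|\nabla J_{1}|^{2}}{1-J_{1}^{2}}+c\,(1-J_{1}^{2})V_{2}$.

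Next I would isolate the harmless pole. For $x\in B(a,r_{0})$ one has $|x+a|\ge 2|a|-|x-a|>2r_{0}-r_{0}=r_{0}$, so $\frac{c\,(1-J_{1}^{2})}{|x+a|^{2}}\le\frac{c}{r_{0}^{2}}$, which accounts for one of the two $c/r_{0}^{2}$ terms in the target. It then remains to produce $g$ with, writing $r=|x-a|$,
\[
\frac{g'(r)^{2}}{1-g(r)^{2}}+\frac{c\,(1-g(r)^{2})}{r^{2}}\le\frac{k_{0}+c}{r_{0}^{2}},\qquad r\in(r_{0}/2,r_{0}),
\]
the two sides vanishing on $(0,r_{0}/2)$. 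Setting $u=\arccos g\colon[r_{0}/2,r_{0}]\to[0,\pi/2]$ (so $g=\cos u$, $\tfrac{g'^{2}}{1-g^{2}}=u'^{2}$, $1-g^{2}=\sin^{2}u$) turns this into
\[
u'(r)^{2}+\frac{c\,\sin^{2}u(r)}{r^{2}}\le\frac{k_{0}+c}{r_{0}^{2}}.
\]

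The heart of the matter, and the step I expect to be the main obstacle, is choosing $u$ so that this holds with a constant $k_{0}<\pi^{2}$ (allowed to depend on $c$). The value $\pi^{2}$ is critical: since $u$ increases from $0$ to $\pi/2$ across an interval of length $r_{0}/2$, one has $\int_{r_{0}/2}^{r_{0}}u'\,dr=\pi/2$, whence $\sup u'^{2}\ge\pi^{2}/r_{0}^{2}$; but evaluating the displayed inequality at the point of maximal slope only forces $k_{0}\ge\pi^{2}-c$, which is compatible with $k_{0}<\pi^{2}$ for every $c>0$. The construction must therefore make the slope $u'$ large near $r_{0}/2$, where $\sin u\approx 0$ and the potential term is negligible, and small (below $\pi/r_{0}$) near $r_{0}$, where $\sin u\approx 1$. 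Following \cite{BDE}, I would take $u$ to solve $u'=\frac{1}{r_{0}}\sqrt{\,E-\tfrac{c\,r_{0}^{2}}{r^{2}}\sin^{2}u\,}$ with $u(r_{0}/2)=0$ and $E=k_{0}+c$, and choose $E$ so that $u$ attains $\pi/2$ exactly at $r=r_{0}$; existence of such an $E$ follows from an overshoot/undershoot argument. The delicate point is the resulting length estimate, since the $r$--dependence makes the equation non--separable: along the trajectory $r$ is small precisely where $\sin u$ is small, so the potential never consumes the whole budget, and one checks that $E$ may be taken strictly below $\pi^{2}+c$, i.e. $k_{0}\in[0,\pi^{2})$.

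Finally I would record the regularity bookkeeping: the profile $g$ is Lipschitz, so $J_{1},J_{2}\in W^{1,\infty}(\R^{N})$, and the constraint $|\nabla J_{i}|^{2}=F(x)(1-J_{i}^{2})$ flagged after property d) holds by construction (the ratio equals $u'^{2}$, which stays bounded), so $J_{3}$ has no gradient singularity and $\{J_{i}\}_{i=1}^{3}$ is an admissible partition of unity. Combining the near--pole inequality with the far--pole bound $c/r_{0}^{2}$ yields $\sum_{i=1}^{3}|\nabla J_{i}|^{2}+c\,J_{3}^{2}\,V_{2}(x)\le\frac{k_{0}+2c}{r_{0}^{2}}$ almost everywhere on $\Omega$, which is the assertion.
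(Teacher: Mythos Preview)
The paper does not supply a proof of this lemma: it is quoted from \cite{BDE}, and the only addition is the remark immediately afterward that the profile $J(t)=\sin(\pi t)$ on $[1/2,1]$ furnishes a partition of the required shape. Your proposal is therefore not to be compared with an argument in the present paper but with the one in \cite{BDE}, and it follows that source faithfully: the symmetry reduction to $\supp(J_1)$, the splitting off of the far pole via $|x+a|\ge r_0$, the substitution $u=\arccos g$ turning the constraint into $u'(r)^{2}+c\,r^{-2}\sin^{2}u(r)\le(k_0+c)/r_0^{2}$, and the shooting construction of $u$ from the ODE $u'=r_0^{-1}\sqrt{E-cr_0^{2}r^{-2}\sin^{2}u}$ are exactly the steps carried out there.

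Your caution about the final step is well placed, and it is worth noting that the simple profile the paper records does \emph{not} by itself deliver the strict inequality: with $g(r)=\sin(\pi r/r_0)$ one has $\tfrac{g'^{2}}{1-g^{2}}\equiv\tfrac{\pi^{2}}{r_0^{2}}$ identically on the annulus, and the residual potential term $c(1-g^{2})V_{2}$ is nonnegative, so in the borderline configuration $r_0=|a|$ the essential supremum of the left-hand side over $\Omega$ equals $(\pi^{2}+2c)/r_0^{2}$ and one is forced to take $k_0=\pi^{2}$. Obtaining $k_0\in[0,\pi^{2})$ genuinely requires the variable-slope profile produced by your ODE argument, in which $u'$ is concentrated near $r_0/2$ where $\sin u$ is small; the overshoot/undershoot reasoning you sketch is the correct mechanism, though a complete proof must still verify the monotone dependence of the hitting point on $E$ and the limiting behaviour as $E\downarrow c$ and $E\uparrow\infty$.
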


\medskip

As observed in \cite{BDE}, a partition of unity satisfying the hypotheses of Lemma \ref{lemma3BDE} 
is given by setting 
\begin{equation}\label{PartitionJ}
J(t):= \left\lbrace 
\begin{array}{ll}
1 &\,\text{ if } t\le 1/2\\
\sin(\pi t) &\,\text{ if } 1/2\le t\le 1\\
0 &\,\text{ if } t\ge 1
\end{array}
\right. 
\end{equation}
and defining $J_1(x):=J(|x-a|/r_0)$, $J_2(x):=J(|x+a|/r_0)$, and $J_3(x):=\sqrt{1-J_1^2-J_2^2}$.

\medskip

Now we are able to proceed with the proof of inequality (\ref{MwhiGeneral}).

\medskip

\begin{proof}[Proof of Theorem \ref{Hardy via IMS}]
Let us define the following quadratic form
\begin{equation}\label{formaQ}
\begin{split}
Q[\varphi]:=\int_{\R^N}\left( |\nabla \varphi|^2 -cV_n(x)\varphi^2 \right)d\mu, 
\qquad \varphi\in H^1_\mu,
\end{split}
\end{equation}
where $V_n(x)=\sum_{i=1}^{n}\frac{1}{|x-a_i|^2}$.

Consider a partition of unity  $\left\lbrace J_i \right\rbrace_{i=1}^{n+1}$ 
satisfying (\ref{omega}) such that $J_i(x)=J(|x-a_i|/r_0)$ for all 
$x\in \R^N$, $i=1,\dots , n$, with $J$ as in (\ref{PartitionJ}), 
${\rm supp}(J_i)=\overline{B(a_i,r_0)}$.
Then $|x-a_i|\ge r_0$ 
in $\overline{B(a_j,r_0)}$ for
 $i\neq j$, and $V_n(x)\le\frac{n}{r_0^2}$ on $\R^N \setminus \bigcup_{i=1}^n \overline{B(a_i,r_0)}$. 

By virtue of Lemma \ref{lemma2BDE} we are able to write (\ref{formaQ}) as follows
\begin{equation}\label{formaQ2}
Q[\varphi]=\sum_{i=1}^{n} Q[J_i\varphi] + R_n, \qquad \varphi\in H^1_\mu,
\end{equation}
where
$$R_n =\int_{\R^N}|\nabla (J_{n+1}\varphi)|^2\, d\mu 
-c\int_{\R^N} V_n|J_{n+1}\varphi|^2 \,d\mu-\sum_{i=1}^{n+1}
\int_{\R^N}|\nabla J_i|^2\varphi^2\,d\mu.$$
Thanks to the property d) we have
\begin{equation*}
\begin{split}
R_n &= \int_{\R^N}|\nabla (J_{n+1}\varphi)|^2\, d\mu 
-c\int_{\R^N} V_n\left(1-\sum_{i=1}^{n}J_i^2 \right)\varphi^2 \,d\mu
\\& \quad- \sum_{i=1}^{n}\int_{\R^N}\frac{|\nabla J_i|^2}{1-J_i^2}\varphi^2\,d\mu\\
&\ge -c\int_{\R^N}V_n\left( 1-\sum_{i=1}^{n}J_i^2\right) \varphi^2\,d\mu -\sum_{i=1}^{n}
\int_{\R^N}\frac{|\nabla J_i|^2}{1-J_i^2}\varphi^2\,d\mu.
\end{split}
\end{equation*}
Moreover, using the condition (\ref{omega}) we get
\begin{equation*}
R_n \ge-\sum_{i=1}^{n}\int_{B(a_i,r_0)}\left[ \frac{|\nabla J_i|^2}{1-J_i^2}
+c\left(1-J_i^2\right)V_n(x)\right]\varphi^2 \,d\mu-
\frac{c\,n}{r_0^2}\int_{\R^N \setminus \bigcup_{i=1}^n \overline{B(a_i,r_0)}}\varphi^2\,d\mu. 
\end{equation*}
For every $i=1, \dots, n$ 
we can apply Lemma \ref{lemma3BDE}
on $B(a_i,r_0)$ with $(a_i,a_j)=(-a,a)$ up to a change of coordinates for some $j\ne i$.
Considering the partition $\left\lbrace J_i, J_j, \sqrt{1-J_i^2-J_j^2} \right\rbrace $ 
and taking into account that $J_j\equiv 0$ on $B(a_i,r_0)$, we get
\begin{equation}
\begin{split}\label{Rn}
R_n\ge& -\sum_{i=1}^{n} \int_{B(a_i,r_0)} \left[ \frac{k_0+2c}{r_0^2}
+c(1-J_i^2)\left( \sum_{k\ne i,j}\frac{1}{|x-a_k|^2} \right)  \right] 
\varphi^2 \, d\mu 
\\&-\frac{c\,n}{r_0^2}\int_{\R^N \setminus \bigcup_{i=1}^n \overline{B(a_i,r_0)}}\varphi^2 \, d\mu\\
\ge 
&-\sum_{i=1}^{n} \int_{B(a_i,r_0)} \left[ \frac{k_0+2c}{r_0^2}+\frac{(n-2)c}{r_0^2}(1-J_i^2) \right] 
\varphi^2 \, d\mu \\
&-\frac{c\,n}{r_0^2}\int_{\R^N \setminus \bigcup_{i=1}^n \overline{B(a_i,r_0)}}\varphi^2 \, d\mu,
\end{split}
\end{equation}
where $k_0\in [0, \pi^2)$, since we can bound $\frac{1}{|x-a_k|^2}$ 
by $\frac{1}{r_0^2}$ for all $k\neq i,j$.
Taking into account (\ref{formaQ2}) and using the unipolar Hardy inequality (\ref{wHi-c_i}), 
which holds under our assumptions with respect to each pole $a_i\in \R^N$, $i=1,\dots ,n$, we obtain
\begin{equation*}
\begin{split}
Q[J_i \varphi]=&\int_{\R^N}|\nabla J_i\varphi|^2\, d\mu - c\int_{\R^N}\Biggl( \frac{1}{|x-a_i|^2}+
\sum_{\substack{j=1\\j\neq i}}^{n}\frac{1}{|x-a_j|^2} \Biggr)|J_i\varphi|^2\, d\mu \\
\ge& -\left[ k_1+\frac{(n-1)c}{r_0^2}\right] \int_{B(a_i,r_0)} |J_i\varphi|^2 \, d\mu, 
\end{split}
\end{equation*}
from which
\begin{equation}\label{sommaQj}
\sum_{i=1}^{n}Q[J_i \varphi] \ge - k_1 \sum_{i=1}^{n}\int_{B(a_i,r_0)} \varphi^2 \, d\mu -
\frac{(n-1)c}{r_0^2}\sum_{i=1}^{n} \int_{B(a_i,r_0)} J_i^2\varphi^2 \, d\mu 
\end{equation}
From (\ref{formaQ2}), (\ref{Rn}) and (\ref{sommaQj}) we deduce
\begin{equation*}
\begin{split}
Q[\varphi]\ge &- \sum_{i=1}^{n}\int_{B(a_i,r_0)}\left[\frac{k_0+2c}{ r_0^2}
+\frac{(n-2)c}{ r_0^2}(1-J_i^2)+
k_1+\frac{(n-1)c}{ r_0^2}J_i^2\right] \varphi^2 \, d\mu\\
& - \frac{c\,n}{r_0^2}\int_{\R^N \setminus \bigcup_{i=1}^n \overline{B(a_i,r_0)}}\varphi^2 \, d\mu.
\end{split}
\end{equation*}
Since
\begin{equation*}
k_0+2c+c(n-2)(1-J_i^2)+c(n-1)J_i^2 = k_0+cn+cJ_i^2\le k_0+c(n+1),
\end{equation*}
we finally obtain
\begin{equation*}
\begin{split}
Q[\varphi]\ge & -\left[\frac{k_0+(n+1)c}{ r_0^2}+k_1\right] \int_{\Omega}\varphi^2 \, d\mu - 
\frac{c\,n}{r_0^2}\int_{\R^N \setminus \bigcup_{i=1}^n \overline{B(a_i,r_0)}}\varphi^2 \, d\mu\\
\ge& -\left[\frac{k_0+(n+1)c}{ r_0^2}+k_1\right] \int_{\R^N}\varphi^2 \, d\mu ,
\end{split}
\end{equation*}
from which we get inequality (\ref{MwhiGeneral}).
\end{proof}

\bigskip

\section{A class of weight functions}

A class of weight functions satisfying hypotheses $H_1)$ and $H_2)$
is the following
\begin{equation}\label{example mu}
\mu(x)=\frac{e^{-\delta\sum_{j=1}^{n}|x-a_j|^m}}{|x-a_1|^\gamma\cdot
 \cdots \cdot |x-a_n|^\gamma},\qquad \delta\ge 0, \quad \gamma<N-2,\quad m\le 2.
\end{equation}
For $\gamma=0$, $\delta\ne0$ and $m=2$ we get the Gaussian function. 

Taking into account that out of the ball $B(a_i , r_0)$ the term $\frac{1}{|x-a_j|}$ is bounded
and the balls are disjoined, we can see that
the function $\mu$ satisfies $H_1)$ if $\gamma>-N$.
In order to verify $H_2)$, with $\beta=-\alpha$, $\alpha<0$, 
we proceed in the following way.

We observe that, if $\mu_j=\frac{e^{-\delta|x-a_j|^m}}{|x-a_j|^\gamma}$, then
$$
\frac{\nabla\mu}{\mu}=\sum_{j=1}^{n}\frac{\nabla\mu_j}{\mu_j}=
\sum_{j=1}^{n}\left( -\gamma-\delta m |x-a_j|^m\right) \frac{(x-a_j)}{|x-a_j|^2}.
$$
Starting from $H_2)$ and using (\ref{by double product}) we get
\begin{equation}\label {cond for example}
\begin{split}
-\alpha\sum_{i=1}^{n}&\frac{(x-a_i)}{|x-a_i|^2}\cdot\frac{\nabla \mu}{\mu}
= 
\sum_{i=1}^{n}\sum_{j=1}^{n}
\left( -\alpha\gamma-\alpha\delta m |x-a_j|^m\right) 
\frac{(x-a_i)\cdot(x-a_j)}{|x-a_i|^2|x-a_j|^2} 
\\&= 
\sum_{i=1}^{n}\sum_{j=1}^{n}
\left( -\alpha\frac{\gamma}{2}-\alpha\frac{\delta m}{2}|x-a_j|^m\right)
\left(1+\frac{|x-a_i|^2-|a_i-a_j|^2}{|x-a_j|^2}\right)
\\&\le 
k_1+ \sum_{i=1}^{n}\frac{k_2 \alpha}{|x-a_i|^2|x-a_i|^2}.
\end{split}
\end{equation}

\noindent In $B(a_k, r_0)$, for any $k$,  we isolate the term with $i=k$, so the condition $H_2)$ takes the form
\begin{equation}\label{ineq Bk}
\begin{split}
&\frac{-\alpha \gamma 
-\alpha\delta m|x-a_k|^m}{|x-a_k|^2}+
\sum_{i\ne k}\frac{-\alpha \gamma -\alpha\delta m|x-a_i|^m}{|x-a_i|^2}
\\&+
\frac{1}{|x-a_k|^2}\sum_{j\ne k}
\frac{
\left(-\alpha \frac{\gamma}{2} -\frac{\alpha\delta m}{2}|x-a_j|^m\right)
\left(|x-a_j|^2+|x-a_k|^2-|a_k-a_j|^2\right)}
{|x-a_j|^2}
\\&+
\sum_{i\ne k}\frac{1}{|x-a_i|^2}\sum_{j\ne i}
\frac{
\left(-\alpha \frac{\gamma}{2} -\frac{\alpha\delta m}{2}|x-a_j|^m\right)
\left({|x-a_j|^2+|x-a_i|^2-|a_i-a_j|^2}\right)}
{|x-a_j|^2}
\\&=
J_1+J_2+J_3+J_4\le
k_1+\frac{k_2\alpha}{|x-a_k|^2}+\sum_{i\ne k}\frac{k_2\alpha}{|x-a_i|^2}.
\end{split}
\end{equation}

We observe that, in $B(a_k, r_0)$,
$$
|x-a_k|\le r_0, \qquad
r_0\le |x-a_j|\le r_0+|a_k-a_j|\qquad\forall\, j\ne k
$$
then
$$
J_2+J_4\le \sum_{i\ne k}\frac{k_1}{n-1} + \sum_{i\ne k}\frac{k_2\alpha}{|x-a_i|^2}
$$
for $k_1$ large enough.
On the other hand
\begin{equation}\label{Bk}
\begin{split}
(J_1&+J_3)|x-a_k|^2\le 
-\alpha \gamma \left( 1+\frac{n-1}{2}- 
\frac{1}{2}\sum_{j\ne k}\frac{|a_k-a_j|^2}{|x-a_j|^2}\right) 
-\alpha k_2
\\&
-\alpha\frac{\delta m}{2} \sum_{j\ne k}|x-a_j|^m\left(1-\frac{|a_k-a_j|^2}{|x-a_j|^{2}}\right)
-\left(\frac{k_1}{n}+\alpha \frac{\gamma}{2} 
\sum_{j\ne k}\frac{1}{|x-a_j|^2}\right.
\\&
\left.+\alpha\frac{\delta m}{2} \sum_{j\ne k}\frac{1}{|x-a_j|^{2-m}}\right)|x-a_k|^2
-\alpha \delta m |x-a_k|^m  \le 0.
\end{split}
\end{equation}
We observe that when $x$ is near to the pole $a_k$ the contribution of the 
other poles tends to zero.

To estimate the term with of $|x-a_j|^m$  we use the relation 
$$
|x-a_j|\le |x-a_i|+|a_i-a_j| \qquad\forall\, i,j\in\{1,\dots n\}.
$$
Then we get
\begin{equation}\label{alpha delta m}
\begin{split}
-\alpha\frac{\delta m}{2}& \sum_{j\ne k}|x-a_j|^m  \left(1-
\frac{|a_k-a_j|^2}{|x-a_j|^{2}}\right)
\\& \le
-\alpha\frac{\delta m}{2} \sum_{j\ne k}\left(|x-a_k|+|a_k-a_j|\right)^m
\left[1-\frac{|a_k-a_j|^2}{(|x-a_k|+|a_k-a_j|)^2}\right].
\end{split}
\end{equation}
If $ |x-a_k|\le \rho$, $\rho\le r_0$, the last term in 
(\ref{alpha delta m}) 
can be estimated by
$$
-\alpha\frac{\delta m}{2} \sum_{j\ne k}\left(\rho+|a_k-a_j|\right)^m
\left[1-\frac{|a_k-a_j|^2}{(\rho+|a_k-a_j|)^2}\right]:=-\alpha\frac{\delta m}{2}c_\rho
$$
observing that $c_\rho$ tends to zero when $\rho$ goes to zero.
Then inequality  (\ref{Bk})  is satisfied for $k_1$ large enough, with $\rho$ small enough, and
$$
0\le\gamma\le -\frac{k_2+\frac{\delta m}{2}c_\rho}{1+\frac{n-1}{2}+c_1}
\quad \hbox{and}\quad
 -\frac{k_2+\frac{\delta m}{2}c_\rho}{1+\frac{n-1}{2}+c_1}\le\gamma<0.
$$
where
$$
c_1= \left\{\begin{array}{ll}
-\frac{1}{2}\sum_{j\ne k}\frac{|a_k-a_j|^2}{(r_0+|a_k-a_j|)^2} \>&\text{ if } \> \gamma>0 \\
-\frac{1}{2}\sum_{j\ne k}\frac{|a_k-a_j|^2}{r_0^2}
 \> &\text{ if } \> \gamma<0,
\end{array}
\right. 
$$


\noindent Far enough away from the other poles $a_j$, 
with $j\ne k$, and for $ |x-a_k|\ge \rho$,
 the condition $H_2)$ is connected to the inequality 
\begin{equation}\label{out of union Bk}
-\alpha \gamma\left( 1+\frac{n-1}{2}+c_2\right) 
-\alpha k_2-\left(\frac{k_1}{n}+c_3\right)|x-a_k|^2
-\alpha\delta m c_4|x-a_k|^m\le 0
\end{equation}
where the constant $c_2, c_3$ and $c_4$ are so defined:

$$
c_2= \left\{\begin{array}{ll}
0 \>&\text{ if } \> \gamma>0 \\
c_1
 \> &\text{ if } \> \gamma<0
\end{array}
\right. 
,\quad
c_3= \left\{\begin{array}{ll}
\alpha \frac{\gamma}{2}\frac{n-1}{r_0^2}+
\alpha \frac{\delta m}{2}\frac{n-1}{r_0^{2-m}}
&\text{ if } \> \gamma>0 \\
\alpha \frac{\delta m}{2}\frac{n-1}{r_0^{2-m}}
\> &\text{ if } \> \gamma<0
\end{array}
\right. 
,\quad
c_4=1+\frac{c_\rho}{2\rho^m}.
$$
The inequalities (\ref{Bk}) and (\ref{out of union Bk}) 
are both verified if $k_1$ is large enough, $\rho$ small enough, and
$$
0\le\gamma\le -\frac{k_2}{1+\frac{n-1}{2}}\quad \hbox{and}\quad
 -\frac{k_2+\frac{\delta m}{2}c_\rho}{1+\frac{n-1}{2}+c_1}\le\gamma<0.
$$
In order to verify $H'_2)$ we start with the analogous of  (\ref{cond for example}) 
\begin{equation}\label{con with i}
\frac{\nabla f_{\varepsilon, i}}{f_{\varepsilon, i}}\cdot\frac{\nabla \mu}{\mu}=
\frac{\nabla f_{\varepsilon, i}}{f_{\varepsilon, i}}\cdot
\sum_{j=1}^{n}\frac{\nabla\mu_j}{\mu_j}
\le k_1+ \frac{k_2 \alpha}{\varepsilon +|x-a_i|^2}.
\end{equation}
and
reason as in the previous case in
$B(a_i , r_0)$, for any $i\in\{1,\dots, n\}$.

\bigskip

\section{Optimality of the constant}

In order to get the optimality of the constant on the left-hand side in the multipolar Hardy 
inequality we need a further assumption on the function $\mu$.

So we assume that
\begin{itemize}
\item [$H_3)$] 
there exists $i\in \{1,\dots,n\}$ such that 
$$
\sup\left\lbrace \delta \in \R : \frac{1}{|x-a_i|^{\delta}}\in L_{loc}^1(\R^N, d\mu)\right\rbrace =N+k_2.
$$
\end{itemize}
The above condition allows us to estimate the bottom of the spectrum of $-(L+V)$
in a suitable way.

Now we can state the optimality result.

\bigskip

\begin{thm}\label{thm-opt}
	In the hypotheses of Theorem \ref{Hardy via IMS} and if $H_3)$ holds,
	for \break $c>c_o(N+k_2)=\left( \frac{N+k_2-2}{2} \right)^2$ 
	the inequality (\ref{MwhiGeneral}) doesn't hold for any $\varphi \in H_\mu^1$.
\end{thm}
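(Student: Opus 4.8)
\emph{Approach.} I would first translate the statement into the language of the bottom of the spectrum: the inequality (\ref{MwhiGeneral}) holds for all $\varphi\in H^1_\mu$ with \emph{some} constant in place of $\frac{k_0+(n+1)c}{r_0^2}+k_1$ precisely when $\lambda_1(L+cV_n)>-\infty$, where $V_n=\sum_{i=1}^n|x-a_i|^{-2}$; moreover $\lambda_1(L+cV_n)=-\infty$ makes (\ref{MwhiGeneral}) fail even with the particular constant written there. So the theorem amounts to showing $\lambda_1(L+cV_n)=-\infty$ whenever $c>c_o(N+k_2)$, and I would obtain this by testing the Rayleigh quotient with functions concentrated at the single pole singled out by $H_3)$, adapting the spectral estimate of \cite{CGRT}.

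\emph{Reduction to one pole.} Let $a_i$ be the pole given by $H_3)$ and fix $R\in(0,r_0)$; then $|x-a_j|\ge r_0$ on $B(a_i,R)$ for every $j\ne i$, so $V_n\ge|x-a_i|^{-2}$ there. Hence for $\varphi\in H^1_\mu$ with $\supp\varphi\subset B(a_i,R)$ the validity of (\ref{MwhiGeneral}) with any constant $K$ would force the unipolar inequality
\[
c\int_{\R^N}\frac{\varphi^2}{|x-a_i|^2}\,d\mu\le\int_{\R^N}|\nabla\varphi|^2\,d\mu+K\int_{\R^N}\varphi^2\,d\mu .
\]
So it suffices to show that for $c>c_o(N+k_2)$ no such $K$ exists, i.e.\ that the unipolar bottom of the spectrum, restricted to functions supported near $a_i$, is $-\infty$.

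\emph{The test family.} Reasoning as in \cite{CPT1}, I would set $\rho=|x-a_i|$ and use, for $\sigma<\tfrac{N+k_2-2}{2}$, the function $\varphi_\sigma=\rho^{-\sigma}\eta(\rho)$ with $\eta$ smooth, $\eta\equiv1$ on $[0,R/2]$ and $\supp\eta\subset[0,R)$ (in the borderline case, when $\rho^{-(N+k_2)}\in L^1_{loc}(d\mu)$ near $a_i$, one inserts an extra slowly varying factor such as a negative power of $\log(1/\rho)$, calibrated so that the integral $\int\rho^{-2}\varphi_\sigma^2\,d\mu$ below still diverges). Since $2\sigma+2<N+k_2$, hypothesis $H_3)$ guarantees $\rho^{-2\sigma},\rho^{-2\sigma-2}\in L^1_{loc}(\R^N,d\mu)$, and a truncation and mollification near $a_i$ places $\varphi_\sigma$ in $H^1_\mu$. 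Computing $\nabla\varphi_\sigma$ (its leading part is $-\sigma\rho^{-\sigma-2}(x-a_i)\eta$) and noting that the cutoff annulus $\{R/2\le\rho\le R\}$ contributes an amount bounded uniformly in $\sigma$ because $\mu(B(a_i,R))<\infty$, I expect
\[
\int_{\R^N}|\nabla\varphi_\sigma|^2\,d\mu=\sigma^2\int_{\R^N}\frac{\varphi_\sigma^2}{\rho^2}\,d\mu+O(1),\qquad\int_{\R^N}\varphi_\sigma^2\,d\mu=O(1),
\]
as $\sigma\uparrow\tfrac{N+k_2-2}{2}$.

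\emph{Conclusion and main difficulty.} Hypothesis $H_3)$ makes $N+k_2$ the critical integrability exponent of $\mu$ at $a_i$, so $\int_{\R^N}\rho^{-2}\varphi_\sigma^2\,d\mu=\int_{\R^N}\rho^{-2\sigma-2}\eta^2\,d\mu\to+\infty$ as $\sigma\uparrow\tfrac{N+k_2-2}{2}$ (this is the role of the auxiliary factor in the borderline case). Then, for $c>c_o(N+k_2)$ and $\sigma$ close enough to $\tfrac{N+k_2-2}{2}$ one has $\sigma^2<c$, hence
\[
\int_{\R^N}|\nabla\varphi_\sigma|^2\,d\mu-c\int_{\R^N}\frac{\varphi_\sigma^2}{\rho^2}\,d\mu=(\sigma^2-c)\int_{\R^N}\frac{\varphi_\sigma^2}{\rho^2}\,d\mu+O(1)\longrightarrow-\infty
\]
while $\int\varphi_\sigma^2\,d\mu$ stays bounded; after normalising this gives $\lambda_1(L+cV_n)=-\infty$, so no admissible constant can occur in (\ref{MwhiGeneral}). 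I expect the main obstacle to be exactly the bookkeeping behind the two displayed asymptotics: controlling the cross terms in $|\nabla\varphi_\sigma|^2$ and the cutoff region uniformly in $\sigma$, and — in the borderline case for $\mu$ — choosing the slowly varying correction so that $\int\rho^{-2}\varphi_\sigma^2\,d\mu$ diverges while $\int|\nabla\varphi_\sigma|^2\,d\mu$ stays comparable to it. Once $H_3)$ is assumed, this is the unipolar computation of \cite{CPT1} and \cite{CGRT}, which I would transcribe to the pole $a_i$.
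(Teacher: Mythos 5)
Your overall strategy --- bounding the Rayleigh quotient of $-(L+cV_n)$ from above by test functions concentrated at the pole $a_i$ singled out by $H_3)$, and showing the bottom of the spectrum is $-\infty$ --- is exactly the paper's, and the reduction to a unipolar quotient near $a_i$ is fine. The genuine gap is in your test family. You take $\varphi_\sigma=\rho^{-\sigma}\eta(\rho)$, $\rho=|x-a_i|$, and let $\sigma\uparrow\frac{N+k_2-2}{2}$, relying on $\int\rho^{-2\sigma-2}\eta^2\,d\mu\to+\infty$. By monotone convergence that limit equals $\int\rho^{-(N+k_2)}\eta^2\,d\mu$, and $H_3)$ only asserts that $N+k_2$ is the \emph{supremum} of the admissible exponents; it does not exclude that the supremum is attained. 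In the borderline case $\rho^{-(N+k_2)}\in L^1_{loc}(\R^N,d\mu)$ (consistent with $H_3)$, e.g.\ a weight behaving like $\rho^{k_2}|\log\rho|^{-2}$ near $a_i$) your integral stays bounded as $\sigma$ increases, the quotients converge to a finite number, and no contradiction is reached. The repair you sketch --- a \emph{negative} power of $\log(1/\rho)$ --- goes the wrong way: it makes $\varphi_\sigma$ smaller and hence $\int\rho^{-2}\varphi_\sigma^2\,d\mu$ smaller still; a positive power could force that integral to diverge but then typically also forces $\int|\nabla\varphi_\sigma|^2\,d\mu$ to diverge, pushing $\varphi_\sigma$ out of $H^1_\mu$. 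The calibration you defer is precisely the unresolved difficulty.

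The paper avoids this by exploiting the room created by $c>c_o(N+k_2)$: since then $-\sqrt{c}<-\frac{N+k_2-2}{2}$, one can fix a single exponent $\eta$ with $\max\bigl\{-\sqrt{c},-\frac{N+k_2}{2}\bigr\}<\eta<\min\bigl\{-\frac{N+k_2-2}{2},0\bigr\}$, so that $\eta^2<c$, $\rho^{2\eta}\in L^1_{loc}(d\mu)$, and --- because $2-2\eta$ is \emph{strictly} larger than $N+k_2$ --- hypothesis $H_3)$ forces $\rho^{2\eta-2}\notin L^1_{loc}(d\mu)$ whether or not the supremum is attained. The singularity is then tamed by the regularization $(\varepsilon+\rho)^\eta$ times a cutoff (a bounded Lipschitz function of compact support, hence unambiguously in $H^1_\mu$, which also spares the approximation argument you need to place $\rho^{-\sigma}$ in $H^1_\mu$); letting $\varepsilon\to0$ and using Fatou's lemma sends the numerator to $-(c-\eta^2)\int_{B(a_i,1)}\rho^{2\eta-2}\,d\mu=-\infty$ while the denominator is bounded below by the contribution of the annulus $B(a_i,2)\setminus B(a_i,1)$. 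If you replace your $\sigma$-limit by this fixed supercritical $\eta$ with $\varepsilon$-regularization, your bookkeeping of the gradient and of the $O(1)$ cutoff terms carries over essentially verbatim and the proof closes in all cases allowed by $H_3)$.
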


\begin{proof}
	Let us fix a pole $a_i$ such that $H_3)$ holds. Let $\theta \in C_c^\infty(\R^N)$ a cut-off 
	function, $0 \le \theta \le 1$, 
	$\theta=1$ in $B(a_i, 1)$ and $\theta=0$ in $B(a_i, 2)^c$.
	We introduce the function
	$$
	\varphie(x)= \left\{\begin{array}{ll}
	(\varepsilon+|x-a_i|)^\eta \quad &\text{ if } |x-a_i|\in [0,1[,\\
	(\varepsilon+|x-a_i|)^\eta \theta(x) \quad &\text{ if } |x-a_i|\in [1,2[,\\
	0 \quad &\text{ if } |x-a_i|\in [2,+\infty[,
	\end{array}
	\right. $$
	where $\varepsilon >0$ and the exponent $\eta$ is such that
	$$
	\max \left\lbrace -\sqrt{c},-\frac{N+k_2}{2} \right\rbrace< \eta
	< \min \left\lbrace -\frac{N+k_2-2}{2}, 0 \right\rbrace. 
	$$
	The function $\varphie $ belongs to $H_\mu^1$ for any $\varepsilon >0$.
	
	For this choice of $\eta$ we obtain $\eta^2 < c$,  $|x|^{2\eta}\in L_{loc}^1(\R^N,d\mu)$
	and $|x|^{2\eta-2}\notin L_{loc}^1(\R^N,d\mu)$.
	
	Let us assume that $c>c_o(N+k_2)$.   
	Our aim is to prove that the bottom of the spectrum of the operator $-(L+V)$ 
	\begin{equation}\label{lambda1}\displaystyle
	\lambda_1=\inf_{\varphi \in H^1_\mu\setminus \{0\}}
	\left(\frac{\int_{{\mathbb R}^N}|\nabla \varphi |^2\,d\mu
		-c\sum_{j=1}^{n}\int_{{\mathbb R}^N}
		\frac{\varphi^2}{|x-a_j|^2}\,d\mu}{\int_{{\mathbb R}^N}\varphi^2\,d\mu}
	\right).
	\end{equation}
	is $-\infty$. For this purpose we estimate at first the numerator in (\ref{lambda1}).
	
	\begin{equation}\label{numerator}
	\begin{split}
	\int_{\R^N}& \left( |\nabla \varphie|^2 -\sum_{j=1}^{n}
	\frac{c}{|x-a_j|^2}\varphie^2 \right) \, d\mu =
	\\ &=
	\int_{B(a_i,1)}\left[ |\nabla(\varepsilon +|x-a_i|)^\eta|^2 
	-\sum_{j=1}^{n}\frac{c}{|x-a_j|^2}(\varepsilon +|x-a_i|)^{2\eta}\right]\, d\mu 
	\\ &
	+\int_{B^c(a_i,1)}\left[ |\nabla(\varepsilon +|x-a_i|)^\eta \theta|^2
	-\sum_{j=1}^{n}\frac{c}{|x-a_j|^2}(\varepsilon +|x-a_i|)^{2\eta}\theta^2 \right]\, d\mu
	\\ & \le
	\int_{B_{(a_i,1)}}\left[\eta^2(\varepsilon +|x-a_i|)^{2\eta -2}
	-\frac{c}{|x-a_i|^2}(\varepsilon +|x-a_i|)^{2\eta} \right]\, d\mu\\
	&+\eta^2\int_{B^c(a_i,1)}  (\varepsilon +|x-a_i|)^{2\eta -2}\theta^2\, d\mu 
	+ \int_{B^c(a_i,1)}(\varepsilon +|x-a_i|)^{2\eta}|\nabla \theta|^2\, d\mu
	\\&+
	2\eta\int_{B^c(a_i,1)} \theta (\varepsilon +|x-a_i|)^{2\eta-1}\frac{x-a_i}{|x-a_i|}\cdot 
	\nabla \theta\, d\mu
	\\ & \le
	\int_{B(a_i,1)} (\varepsilon +|x-a_i|)^{2\eta} \left[ \frac{\eta^2}{(\varepsilon +|x-a_i|)^2}
	-\frac{c}{|x-a_i|^2}\right]  \,d\mu
	\\&+ 
	2\eta^2\int_{B^c(a_i,1)}(\varepsilon +|x-a_i|)^{2\eta-2}\theta^2\,d\mu 
	+ 2\int_{B^c(a_i,1)}(\varepsilon +|x-a_i|)^{2\eta}|\nabla \theta|^2\,d\mu
	\\ & \le
	\int_{B(a_i,1)} (\varepsilon +|x-a_i|)^{2\eta} \left[ \frac{\eta^2}{(\varepsilon +|x-a_i|)^2} 
	-\frac{c}{|x-a_i|^2}\right]  \,d\mu +C_1,
	\end{split}
	\end{equation}
	where $C_1=2\left(\eta^2+ \|\nabla \theta\|_\infty \right)\int_{B^c(a_i,1)}d\mu$.
	
	Furthermore 
	\begin{equation}\label{denominator}
	\int_{\R^N}\varphie^2\,d\mu \ge \int_{B(a_i,2)\setminus B(a_i,1) }(\varepsilon +
	|x-a_i|)^{2\eta}\theta^2\,d\mu =C_{2,\varepsilon}.
	\end{equation}
	Putting together (\ref{numerator}) and (\ref{denominator}) we get from (\ref{lambda1})
	$$
	\lambda_1 \le \frac{\int_{B(a_i,1)}(\varepsilon +|x-a_i|)^{2\eta}\left[ \frac{\eta^2}
		{(\varepsilon +|x-a_i|)^2}-\frac{c}{|x-a_i|^2} \right] \,d\mu+C_1}{C_{2,\varepsilon}}.
	$$
	Letting $\varepsilon\to 0$ in the numerator above, taking in mind that 
	$|x-a_i|^{2\eta}\in L^1_{loc}(\R^N, d\mu)$
	and Fatou's lemma, we obtain
	\begin{equation*}
	\begin{split}
	\lim\limits_{\varepsilon\to 0}\int_{B(a_i,1)}(\varepsilon +|x-a_i|)^{2\eta}\left[ \frac{\eta^2}
	{(\varepsilon +|x-a_i|)^2}-\frac{c}{|x-a_i|^2} \right] \,d\mu & \\
	\le -(c-\eta^2)\int_{B(a_i,1)}|x-a_i|^{2\eta-2}\,d\mu &=-\infty
	\end{split}
	\end{equation*}
	and, then,  $\lambda_1=-\infty $.
\end{proof}

\bigskip

\section{Existence and nonexistence results}

We say that $u$ is a weak solution to the problem $ (P)$ 
if, for each $T, R>0 $, we have
$$u\in C(\left[ 0, T \right] , L^2_\mu ), \quad Vu\in L^1(B(0,R) \times \left( 0,T\right) , d\mu dt )$$
and
\begin{equation}\label{weak-sol}
\int_0^T\int_{\R^N}u(-\partial_t\phi - L\phi )\,d\mu dt -\int_{\R^N}u_0\phi(\cdot ,0)\,d\mu =
\int_0^T\int_{\R^N}Vu\phi \, d\mu dt
\end{equation}
for all $\phi \in W_2^{2,1}(\R^N \times \left[ 0,T\right])$ having compact support with 
$\phi(\cdot ,T)=0$.\\

For any domain $\Omega\subseteq \R^N$, $ W_2^{2,1}(\Omega\times (0,T)) $ is the parabolic Sobolev space 
of the functions $u\in L^2(\Omega \times (0,T)) $ having weak space derivatives $D_x^{\alpha}
u\in L^2(\Omega \times (0,T))$ for $|\alpha |\le 2$ and weak time derivative
$\partial_t u \in L^2(\Omega \times (0,T))$ equipped with the norm 
\begin{equation*}
\begin{split}
\|u\|_{W_2^{2,1}(\Omega\times (0,T))}:= \Biggl( 
\|u\|_{L^2(\Omega \times (0,T))}^2 +& \|\partial_t u\|_{L^2(\Omega \times (0,T))}^2 \\
&\, + \sum_{1\le |\alpha |\le 2} \|D^{\alpha}u\|_{L^2(\Omega \times (0,T))}^2
\Biggr)^{\frac{1}{2}}.
\end{split}
\end{equation*}
In order to investigate on existence and nonexistence of positive weak solution to the 
evolution problem $(P)$ using multipolar weighted Hardy inequalities, we need to state some 
preliminary results regarding the operator $L$, its associated semigroup, and the space $H_{\mu}^1$.
These results will allow us to state existence and nonexistence conditions using the Cabr\'e-Martel's 
approach.

\bigskip

Let us assume that the function $\mu$ is a weight function on $\R^N$, $\mu >0$. 
In the hypothesis
$\mu \in C_{loc}^{1,\lambda}(\R^N)$, $\lambda\in(0,1)$
it is known that the operator $L$ with domain
$$D_{max}(L)=\lbrace u\in C_b(\R^N)\cap W_{loc}^{2,p}(\R^N) \text{ for all }  1<p<\infty, 
Lu\in C_b(\R^N)\rbrace$$
is the weak generator of a not necessarily $C_0$-semigroup in $C_b(\R^N)$. 
Since $\int_{\R^N}Lu\,d\mu=0$ for any $u\in C_c^{\infty}(\R^N)$,
then $d\mu=\mu (x)dx$ is the invariant measure for this semigroup in $C_b(\R^N)$. 
So we can extend it to a positivity preserving and analytic $C_0$-semigroup $\lbrace T(t)\rbrace_{t\ge 0}$ 
on $L^2_\mu$, whose generator is still denoted by $L$ (see \cite{BertoldiLorenzi}).

\medskip

In the more general setting, when the assumptions on $\mu$ allow degeneracy at some points, 
we require the further conditions
to get $L$ generates a semigroup. In particular we assume
\begin{itemize}
	\item [$H_4)$] $\mu \in C_{loc}^{1,\lambda}(\R^N\setminus \{a_1, \dots , a_n\})$, $\lambda\in(0,1)$,
	$\mu\in H^{1}_{loc}(\R^N)$,  $\frac{\nabla \mu}{\mu}\in L^r_{loc}(\R^N)$ 
	for some $r>N$, and $\displaystyle \inf_{x\in K}\mu (x)>0$ for any compact set $K\subset \R^N$.
\end{itemize}
So by \cite[Corollary 3.7]{alb-lor-man}), we have that the closure of 
$(L,C_c^{\infty}(\R^N))$ on $L^2_\mu$ generates
a strongly continuous and analytic Markov semigroup $\lbrace T(t)\rbrace_{t\ge 0}$  on $L^2_\mu$.

For such a semigroup $\{T(t)\}_{t\ge 0}$ and its generator $L$ there are 
some interesting properties which we list in the Proposition below.
We omit the proof since it is analogous to \cite[Proposition 2.1]{CGRT}.

\medskip

\begin{prop}\label{propH1} 
	Assume that $\mu$ satisfies $H_4)$. Then the following assertions hold:
	\begin{itemize}
		\item [$1)$] $D(L)\subset H^1_{\mu}$.
		\item [$2)$] For every $f\in D(L),\, g\in H^1_{\mu}$ we have
		\[\int L f g \,d\mu=-\int \nabla f\cdot \nabla g\,d\mu.\]
		\item [$3)$] $T(t)L^2_\mu\subset D(L)$ for all $t>0$.
	\end{itemize}
\end{prop}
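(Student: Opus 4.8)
The plan is to treat $L$ as the symmetric operator associated on $L^2_\mu$ with the Dirichlet form $\mathcal{E}(u,v)=\int_{\R^N}\nabla u\cdot\nabla v\,d\mu$, $u,v\in H^1_\mu$, and to reproduce the argument of \cite[Proposition 2.1]{CGRT}. First I would record the elementary integration by parts identity: writing $Lu=\frac1\mu\,{\rm div}(\mu\nabla u)$, for $\psi,\phi\in C_c^\infty(\R^N)$ one has
$$
\int_{\R^N}L\psi\,\phi\,d\mu=-\int_{\R^N}\nabla\psi\cdot\nabla\phi\,d\mu ,
$$
and in particular $\int_{\R^N}|\nabla\psi|^2\,d\mu=-\int_{\R^N}L\psi\,\psi\,d\mu$. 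I would also observe that $H_4)$ implies $\sqrt\mu\in H^1_{loc}(\R^N)$ and $\mu^{-1}\in L^1_{loc}(\R^N)$ (since $\inf_K\mu>0$ on every compact $K$), so that, as recalled after $H_1)$, $C_c^\infty(\R^N)$ is dense in $H^1_\mu$; moreover, by \cite[Corollary 3.7]{alb-lor-man}, $C_c^\infty(\R^N)$ is a core for the generator $L$ and the semigroup $\{T(t)\}_{t\ge0}$ is analytic on $L^2_\mu$.

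To prove $1)$ and $2)$, fix $f\in D(L)$ and pick $f_k\in C_c^\infty(\R^N)$ with $f_k\to f$ and $Lf_k\to Lf$ in $L^2_\mu$. Applying the identity above to $\psi=f_k-f_j$ gives
$$
\int_{\R^N}|\nabla(f_k-f_j)|^2\,d\mu=-\int_{\R^N}L(f_k-f_j)\,(f_k-f_j)\,d\mu
\le\|L(f_k-f_j)\|_{L^2_\mu}\,\|f_k-f_j\|_{L^2_\mu},
$$
so $(f_k)$ is Cauchy in $H^1_\mu$; its $H^1_\mu$-limit necessarily equals $f$, whence $f\in H^1_\mu$ and $\nabla f_k\to\nabla f$ in $L^2_\mu$, which is $1)$. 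For $2)$, pass to the limit in $\int_{\R^N}Lf_k\,\phi\,d\mu=-\int_{\R^N}\nabla f_k\cdot\nabla\phi\,d\mu$ to obtain $\int_{\R^N}Lf\,\phi\,d\mu=-\int_{\R^N}\nabla f\cdot\nabla\phi\,d\mu$ for every $\phi\in C_c^\infty(\R^N)$; both sides are continuous in $\phi$ with respect to $\|\cdot\|_{H^1_\mu}$ (bounded by $\|Lf\|_{L^2_\mu}$ and by $\|\nabla f\|_{L^2_\mu}$ respectively, the latter finite by $1)$), so the density of $C_c^\infty(\R^N)$ in $H^1_\mu$ extends the identity to all $g\in H^1_\mu$.

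For $3)$, I would simply invoke analyticity: the semigroup produced by \cite[Corollary 3.7]{alb-lor-man} is analytic on $L^2_\mu$, hence immediately differentiable, and therefore $T(t)L^2_\mu\subset D(L)$ for every $t>0$, which is exactly the assertion. The main obstacle in the whole argument is the combined use of $C_c^\infty(\R^N)$ as a core for $L$ and as a dense subspace of $H^1_\mu$: one has to check that $H_4)$ really guarantees both properties for the same weight (the deduction $\sqrt\mu\in H^1_{loc}(\R^N)$, $\mu^{-1}\in L^1_{loc}(\R^N)$ being the point that needs care), after which the Cauchy estimate in $H^1_\mu$ and the limiting procedure are routine, and $3)$ is essentially free once analyticity is in hand.
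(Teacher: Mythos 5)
Your proposal is correct and takes essentially the same route as the paper, which omits the proof precisely because it is the argument of \cite[Proposition 2.1]{CGRT}: integration by parts on the core $C_c^{\infty}(\R^N)$ provided by \cite[Corollary 3.7]{alb-lor-man}, a Cauchy estimate in $H^1_\mu$ yielding $1)$ and, by density, $2)$, and analyticity of $\{T(t)\}_{t\ge 0}$ for $3)$. The one point you flag as needing care --- that $H_4)$ gives $\sqrt{\mu}\in H^1_{loc}(\R^N)$ and $\mu^{-1}\in L^1_{loc}(\R^N)$, so that $C_c^{\infty}(\R^N)$ is simultaneously dense in $H^1_\mu$ and a core for $L$ --- is handled correctly via $\mu\in H^{1}_{loc}(\R^N)$ and $\inf_K\mu>0$.
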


\bigskip
Now we prove two general results, which state 
the density of $C_c^{\infty}\left( \R^N\setminus\{a_1, \dots, a_n\} \right)$ in $W_\mu^{1,p}$, 
$1\le p<\infty$. Note that, if $p=2$, under assumptions $ii)$ and $iii)$ in $H_1)$, the space 
$W_\mu^{1,2}$ coincides with $H_\mu^1$ (see \cite[Corollary 1.2]{T}).

\medskip

Let us set $L^p_\mu:=L^p(\R^N,d\mu)$ and 
$\|u\|_{p,\mu}:=\left( \int_{\R^N}|u|^pd\mu \right)^{\frac{1}{p}}$, $1\le p<\infty$.

We state the following Proposition.
\begin{prop}
Let $W_\mu^{1,p}=\overline{C_c^{\infty}(\R^N)}^{\|\cdot\|_{W_\mu^{1,p}}}$
where $\|u\|_{W_\mu^{1,p}}=\|u\|_{p,\mu}+\|\nabla u\|_{p,\mu}$.
If
\begin{equation}\label{eq:cond1}
\lim_{\delta\to 0}\frac{1}{\delta^p}\int_{B(a_i,\delta)}d\mu=0 
\qquad \text{for any}\quad i=1, \dots , n
\end{equation} 
then $C_c^{\infty}(\R^N\setminus\{a_1, \dots, a_n\})$ is dense in $W^{1,p}_{\mu}$. 
\end{prop}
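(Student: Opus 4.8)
The plan is to prove density by a standard cut-off argument: given $u \in W^{1,p}_\mu$, which by definition can be approximated in the $\|\cdot\|_{W^{1,p}_\mu}$-norm by functions in $C_c^\infty(\R^N)$, it suffices to show that any $u \in C_c^\infty(\R^N)$ can be approximated by functions in $C_c^\infty(\R^N\setminus\{a_1,\dots,a_n\})$. For such $u$ I would introduce, for each small $\delta>0$, cut-off functions $\psi_{i,\delta}$ that vanish near $a_i$ and equal $1$ outside $B(a_i,2\delta)$, and set $\eta_\delta = \prod_{i=1}^n \psi_{i,\delta}$, so that $\eta_\delta u \in C_c^\infty(\R^N\setminus\{a_1,\dots,a_n\})$. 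Concretely one takes $\psi_{i,\delta}(x) = \chi\!\left(|x-a_i|/\delta\right)$ where $\chi\in C^\infty(\R)$, $0\le\chi\le1$, $\chi\equiv 0$ on $[0,1]$, $\chi\equiv 1$ on $[2,\infty)$, so that $|\nabla\psi_{i,\delta}|\le C/\delta$ with support in the annulus $B(a_i,2\delta)\setminus B(a_i,\delta)$.

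Next I would estimate $\|u - \eta_\delta u\|_{W^{1,p}_\mu}$. For the $L^p_\mu$ part, $\|u-\eta_\delta u\|_{p,\mu}^p = \int_{\R^N}|u|^p(1-\eta_\delta)^p\,d\mu \le \|u\|_\infty^p \sum_{i=1}^n \int_{B(a_i,2\delta)}d\mu \to 0$ as $\delta\to 0$ (this uses only $\mu\in L^1_{loc}$, which follows from $H_1)$, and in fact is weaker than \eqref{eq:cond1}). For the gradient part I would write $\nabla(u-\eta_\delta u) = (1-\eta_\delta)\nabla u - u\,\nabla\eta_\delta$. The first term is handled exactly as the $L^p$ term: $\int (1-\eta_\delta)^p|\nabla u|^p\,d\mu \le \|\nabla u\|_\infty^p\sum_i\int_{B(a_i,2\delta)}d\mu\to 0$. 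The second term is the crucial one: since $\nabla\eta_\delta = \sum_i (\nabla\psi_{i,\delta})\prod_{j\ne i}\psi_{j,\delta}$ and the supports $B(a_i,2\delta)\setminus B(a_i,\delta)$ are disjoint for $\delta$ small (as $r_0>0$), we get
\[
\int_{\R^N}|u|^p|\nabla\eta_\delta|^p\,d\mu \le \|u\|_\infty^p\, C^p \sum_{i=1}^n \frac{1}{\delta^p}\int_{B(a_i,2\delta)}d\mu,
\]
and this is precisely where hypothesis \eqref{eq:cond1} enters: each summand tends to $0$ as $\delta\to 0$ (replacing $2\delta$ by $\delta$ is harmless, as $\frac{1}{\delta^p}\int_{B(a_i,2\delta)}d\mu = 2^p\frac{1}{(2\delta)^p}\int_{B(a_i,2\delta)}d\mu\to 0$).

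Combining these estimates, $\|u-\eta_\delta u\|_{W^{1,p}_\mu}\to 0$ as $\delta\to 0$, and since $\eta_\delta u\in C_c^\infty(\R^N\setminus\{a_1,\dots,a_n\})$, a diagonal argument over the two approximation steps yields that $C_c^\infty(\R^N\setminus\{a_1,\dots,a_n\})$ is dense in $W^{1,p}_\mu$. The main (and really only) obstacle is the control of the term $u\,\nabla\eta_\delta$, whose $L^p_\mu$-norm blows up like $\delta^{-1}$ times the measure of a small ball around each pole; condition \eqref{eq:cond1} is tailored exactly to kill this term, so the proof is essentially a matter of organizing the cut-off construction and invoking the hypothesis at the right place. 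I would also remark that, by the cited \cite[Corollary 1.2]{T}, when $p=2$ and $H_1)$ holds the space $W^{1,2}_\mu$ is $H^1_\mu$, so the proposition gives the density of $C_c^\infty(\R^N\setminus\{a_1,\dots,a_n\})$ in $H^1_\mu$ under \eqref{eq:cond1}.
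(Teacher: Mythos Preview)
Your proof is correct and follows essentially the same cut-off strategy as the paper: approximate $u\in C_c^\infty(\R^N)$ by $u$ times a cut-off that vanishes in shrinking balls around the poles, split the gradient term, and use condition~\eqref{eq:cond1} to control the piece carrying $\nabla$(cut-off). The paper builds one global cut-off $\vartheta$ and dilates it, whereas you construct the cut-off directly as a product $\prod_i \psi_{i,\delta}$; this is a cosmetic difference and your version is arguably cleaner in handling the scaling centered at each pole.
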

\begin{proof}
Our aim is to approximate $u\in C_c(\R^N)$ with functions in 
$C_c^{\infty}(\R^N\setminus\{a_1, \dots, a_n\})$ with respect to the norm
$\|\cdot \|_{W_\mu^{1,p}}$.

Let 
$$
\vartheta= \left\{\begin{array}{ll}
0 \quad &\text{ in } \bigcup_{i=1}^n B(a_i, \frac{r_0}{2}),\\
\phi_1 \quad &\text{ in }  \overline{B(a_1,r_0)}\setminus B(a_1, \frac{r_0}{2}),\\
\vdots & \quad\quad\quad\quad \vdots \\
\phi_n \quad &\text{ in }  \overline{B(a_n,r_0)}\setminus B(a_n, \frac{r_0}{2}),\\
1 \quad &\text{ in } \R^N \setminus\bigcup_{i=1}^n B(a_i,r_0),
\end{array}
\right. 
$$
where $\phi_i\in C_b^\infty(R^N)$ for any $i\in \{1,\dots , n \}$,
such that $\phi_i=0$ on $\partial B(a_i, \frac{r_0}{2})$ and
$\phi_i=1$ on $\partial B(a_i, r_0)$.

We observe that $\vartheta_k(x)=\vartheta(kx)$ belongs to
$ C_c^{\infty}(\R^N\setminus\{a_1, \dots, a_n\})$, $\vartheta_k\to 1$ 
pointwisely in $\R^N\setminus\{a_1, \dots, a_n\}$ and
$\|\nabla \vartheta_k\|_{\infty}\leq Ck$.
So we get
\begin{align*}
\|u-(u\vartheta_k)\|^{p}_{W_\mu^{1,p}}\leq
C\left(\|u(1-\vartheta_k)\|^{p}_{p,\mu}+\|\nabla \left( u(1-\vartheta_k)\right)\|^{p}_{p,\mu}\right).
\end{align*}
The first term on the right-hand side converges to $0$ by dominated convergence. As regards the 
second one we have 
\begin{align*}
\|\nabla u(1-\vartheta_k)\|^{p}_{p,\mu}
  &\leq 
  C\left(\int_{\R^N}(1-\vartheta_k)^p|\nabla u|^pd\mu 
  +\int_{\R^N}|\nabla \vartheta_k|^p | u|^pd\mu\right)\\
&\leq C\left( \int_{\R^N}(1-\vartheta_k)^p|\nabla u|^pd\mu 
+k^p\int_{\bigcup_{i=1}^n B(a_i,r_0/k)}|   u|^pd\mu\right)\\
&\leq C\left(\int_{\R^N}(1-\vartheta_k)^p|\nabla u|^pd\mu 
+k^p\|u\|_{\infty}^p\int_{\bigcup_{i=1}^n B(a_i,r_0/k)} d\mu\right).\\
\end{align*}
To get the result we observe that the first integral converges to $0$ by dominated convergence, 
the last one by condition \eqref{eq:cond1}.

\end{proof}

Now we prove the density result.
\begin{prop}
Let $p<N$. If $\mu \in W^{1,p}_{loc}\left( \R^N \right)$ then
$C_c^{\infty}\left( \R^N\setminus\{a_1, \dots, a_n\} \right)$ is dense in $W_\mu^{1,p}$.
\end{prop}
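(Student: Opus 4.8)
The plan is to deduce this statement from the preceding Proposition by checking that its hypothesis \eqref{eq:cond1} is automatically satisfied once $\mu\in W^{1,p}_{loc}(\R^N)$ and $p<N$. Since $p<N$, the Sobolev embedding theorem gives $W^{1,p}_{loc}(\R^N)\hookrightarrow L^{p^*}_{loc}(\R^N)$ with $p^*=\frac{Np}{N-p}$; hence $\mu\in L^{p^*}_{loc}(\R^N)$, that is, $\mu^{p^*}\in L^1_{loc}(\R^N)$.

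Next, I would estimate the left-hand side of \eqref{eq:cond1}. Fix a pole $a_i$ and a small $\delta>0$, and apply H\"older's inequality on $B(a_i,\delta)$ with exponents $p^*$ and its conjugate $q=\frac{p^*}{p^*-1}$:
\[
\int_{B(a_i,\delta)}d\mu=\int_{B(a_i,\delta)}\mu\,dx
\le\left(\int_{B(a_i,\delta)}\mu^{p^*}dx\right)^{1/p^*}|B(a_i,\delta)|^{1/q}
=c_{N,p}\,\|\mu\|_{L^{p^*}(B(a_i,\delta))}\,\delta^{N/q},
\]
where a direct computation gives $N/q=N\bigl(1-\tfrac1{p^*}\bigr)=N+1-\tfrac Np$. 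Dividing by $\delta^p$,
\[
\frac1{\delta^p}\int_{B(a_i,\delta)}d\mu
\le c_{N,p}\,\|\mu\|_{L^{p^*}(B(a_i,\delta))}\,\delta^{\,g(p)},
\qquad g(p):=N+1-\frac Np-p .
\]

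It then remains to observe two things. First, $g(p)\ge 0$ on $[1,N]$: since $g'(p)=\frac N{p^2}-1$, the function $g$ increases on $[1,\sqrt N\,]$ and decreases on $[\sqrt N,N]$, while $g(1)=g(N)=0$; hence $g(p)\ge 0$ for all $1\le p< N$, so the factor $\delta^{g(p)}$ stays bounded as $\delta\to 0$. Second, $\mu^{p^*}dx$ is a locally finite (hence absolutely continuous) measure, so $\int_{B(a_i,\delta)}\mu^{p^*}dx\to 0$, i.e. $\|\mu\|_{L^{p^*}(B(a_i,\delta))}\to 0$, as $\delta\to 0$. Combining, $\frac1{\delta^p}\int_{B(a_i,\delta)}d\mu\to 0$ for every $i$, which is precisely \eqref{eq:cond1}; the conclusion then follows immediately from the preceding Proposition.

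The argument is essentially routine; the only point needing a little care is the verification that the exponent $g(p)=N+1-\tfrac Np-p$ is non-negative over the whole admissible range $1\le p<N$, in particular the borderline cases $p\to 1^+$ and $p\to N^-$, where $g$ vanishes and one must use the smallness of $\|\mu\|_{L^{p^*}(B(a_i,\delta))}$ (rather than a positive power of $\delta$) to conclude.
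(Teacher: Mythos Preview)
Your proof is correct and follows essentially the same route as the paper's: reduce to the preceding Proposition by verifying condition~\eqref{eq:cond1} via the Sobolev embedding $W^{1,p}_{loc}\hookrightarrow L^{p^*}_{loc}$ together with H\"older's inequality on $B(a_i,\delta)$. In fact your treatment is slightly more careful: the paper asserts that the exponent $N/(p^*)'-p$ is strictly positive for $p<N$, whereas it actually vanishes at $p=1$; your observation that $\|\mu\|_{L^{p^*}(B(a_i,\delta))}\to 0$ handles that borderline case as well.
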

\begin{proof}
We have $\mu\in L_{loc}^{p^*}(\R^N)$ where $p^*=\frac{Np}{N-p}$ is the Sobolev exponent of $p$.
It suffices to verify  condition \eqref{eq:cond1}. Then,
for any $i=1, \dots , n$ 
\begin{align*}
\frac{1}{\delta^p}\int_{B(a_i,\delta)}\mu dx
\leq \frac{1}{\delta^p}\left( \int_{B(a_i,\delta)}\mu^{p^*}\,dx \right)^{\frac{1}{p^*}}
\left( \int_{B(a_i,\delta)}dx \right)^{\frac{1}{(p^*)'}} 
\leq C\delta ^{\frac{N}{(p^*)'}-p},
\end{align*}
where $\frac{1}{p^*}+\frac{1}{(p^*)'}=1$.
One can easily verify that $\frac{N}{(p^*)'}-p>0$ if $p<N.$
\end{proof}

\bigskip

Using the density of $C_c^{\infty}(\R^N\setminus\{a_1, \dots, a_n\})$ in $H^1_\mu$ we are able to 
prove the following Lemma for compact sets contained in $\R^N\setminus\{a_1, \dots, a_n\}$. 
The result allows us to extend the Cabr\'e-Martel's approach to the case of weight function 
having many singularities stating an estimate for a weak solution to the problem $(P)$ 
(cf. \cite[Theorem 2.1]{GGR}). 
The proof makes use of the same technique as in \cite[Lemma 2.2] {CGRT} 
in the case of one singularity.
\begin{lemma}\label{lm:strictly-positivity}
Let $V$ be a positive function belonging to $L^1_{loc}(\R^N)$. Let $u$ be a weak solution of $(P)$. 
Then, for every compact set $K\subset \R^N\setminus\{a_1, \dots, a_n\}$ and $t>0$
there exists $c(t)>0$ (not depending on $V)$
such that 
$$u(t,x)\geq c(t)\int _{K}u_0 \,d\mu \qquad\text{on}\quad K\times [0,T].$$
\end{lemma}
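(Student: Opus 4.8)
The plan is to bound $u$ from below by the unperturbed semigroup and then exploit that this semigroup is strongly positive on compact sets away from the poles. First I would prove that for every $t\in(0,T]$
\[
u(t,\cdot)\ \ge\ T(t)u_0 \qquad \mu\text{-a.e. in }\R^N,
\]
where $\{T(t)\}_{t\ge 0}$ is the $C_0$-semigroup generated by $L$ given in Proposition \ref{propH1}. The point is that, since $0\le V\in L^1_{loc}(\R^N)$ and $u\ge 0$, the weak solution $u$ is a weak \emph{supersolution} of $\partial_s w=Lw$, $w(0)=u_0$: testing (\ref{weak-sol}) against any nonnegative admissible $\phi$ gives $\int_0^T\!\!\int u(-\partial_s\phi-L\phi)\,d\mu\,ds-\int u_0\phi(\cdot,0)\,d\mu=\int_0^T\!\!\int Vu\phi\,d\mu\,ds\ge 0$. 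A convenient way to turn this into the pointwise bound is to truncate: for $k\ge 1$ put $V_k:=\min\{V,k\}\in L^\infty(\R^N)$; the bounded perturbation $L+V_k$ generates a positive semigroup $T_k(t)=\sum_{j\ge 0}S_j(t)$ (Dyson–Phillips), with $S_0=T$ and $S_{j+1}(t)=\int_0^tT(t-r)V_kS_j(r)\,dr$, all terms nonnegative because $T(t)$ is positivity preserving and $V_k\ge 0$, so that $T_k(t)\ge T(t)$; and $w_k(t):=T_k(t)u_0$ is the weak solution of $(P)$ with potential $V_k$. Since $u$ is a supersolution of this $V_k$-problem (because $Vu\ge V_ku$), the comparison principle for the bounded-potential problem yields $u(t,\cdot)\ge w_k(t,\cdot)=T_k(t)u_0\ge T(t)u_0$. (Alternatively, one tests (\ref{weak-sol}) directly against $\phi(x,s)=\theta_\varepsilon(s)\,T((t-s)^+)f(x)$ with $0\le f\in C_c^\infty(\R^N)$ and $\theta_\varepsilon\uparrow\mathbf{1}_{[0,t]}$, uses $\partial_s\phi+L\phi=0$ on $(0,t)$ together with $u\in C([0,T],L^2_\mu)$ and the self-adjointness of $T(t)$ on $L^2_\mu$ — the generator being symmetric by Proposition \ref{propH1}(2) — and lets $\varepsilon\to 0$ to get $\int (u(t,\cdot)-T(t)u_0)f\,d\mu=\int_0^t\!\!\int Vu\,T(t-s)f\,d\mu\,ds\ge 0$, whence the claim by density of such $f$ in $L^2_\mu$.)

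Next, fix a compact $K\subset\R^N\setminus\{a_1,\dots,a_n\}$ and $t>0$; I would prove $(T(t)u_0)(x)\ge c(t)\int_K u_0\,d\mu$ for all $x\in K$, with $c(t)>0$ depending only on $t$, $K$ and $\mu$. By $H_4)$ the weight $\mu$ is locally $C^{1,\lambda}$ and bounded away from $0$ and $\infty$ on a neighbourhood of $K$, so local parabolic regularity applies: the Markov semigroup $\{T(t)\}$ is analytic and irreducible and has an integral kernel $p(t,x,y)$ with respect to $d\mu$ which is continuous and strictly positive on $(0,\infty)\times\R^N\times\R^N$ (strict positivity near $K$ following, e.g., from the parabolic Harnack inequality, available there since the second-order part is the Laplacian and $\mu$ is nondegenerate near $K$). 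Then $c(t):=\inf_{[t,T]\times K\times K}p>0$ by compactness, and for $x\in K$
\[
(T(t)u_0)(x)=\int_{\R^N}p(t,x,y)u_0(y)\,d\mu(y)\ \ge\ \int_K p(t,x,y)u_0(y)\,d\mu(y)\ \ge\ c(t)\int_K u_0\,d\mu.
\]
Combining with the first step gives $u(t,x)\ge (T(t)u_0)(x)\ge c(t)\int_K u_0\,d\mu$ on $K$, and the same compactness of $[t,T]\times K\times K$ provides the estimate uniformly for the prescribed range of times; note that $c(t)$ does not involve $V$.

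The main obstacle is the first step — making $u\ge T(t)u_0$ rigorous for a merely $L^1_{loc}$, possibly unbounded potential and for a weak solution in the sense of (\ref{weak-sol}). The delicate points are: (i) the admissible test functions in (\ref{weak-sol}) vanish at $s=T$, so to read off information at an interior time $t<T$ one must either insert a time cut-off and justify the limit via $u\in C([0,T],L^2_\mu)$ and the strong continuity of $\{T(t)\}$, or, as above, reduce to bounded potentials and use the comparison principle for the bounded-perturbation problem; (ii) the term $\int_0^t\!\!\int Vu\,\phi\,d\mu\,ds$ must be shown to be well defined and nonnegative, which is where one uses $Vu\in L^1(B(0,R)\times(0,T),d\mu\,dt)$ from the definition of weak solution together with $\phi\ge 0$ of compact support; and (iii) in the direct route one needs $T((t-s)^+)f$ (or its spatial truncations) to be an admissible test function, requiring an extra spatial cut-off with controlled error. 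All of this is precisely the content of \cite[Lemma 2.2]{CGRT}, and it transfers to the present multipolar setting once Proposition \ref{propH1} and the density results just established are available; the only change is that the set on which $\mu$ behaves regularly is $\R^N\setminus\{a_1,\dots,a_n\}$ rather than the complement of a single pole.
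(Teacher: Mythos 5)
Your overall strategy (bound $u$ from below by an unperturbed evolution, then use strict positivity of a heat kernel on $K$) is the same in spirit as the paper's, but you run it globally, and that is where a genuine gap appears. In your second step you assert that the semigroup $\{T(t)\}_{t\ge0}$ generated by the closure of $(L,C_c^\infty(\R^N))$ on $L^2_\mu$ admits an integral kernel $p(t,x,y)$ with respect to $d\mu$ that is continuous and strictly positive on $(0,\infty)\times\R^N\times\R^N$, and that $T(t)$ is irreducible. None of this is available here: the only information the paper has about $T(t)$ is that it is a strongly continuous analytic Markov semigroup (via \cite[Corollary 3.7]{alb-lor-man}); the weight $\mu$ may degenerate or blow up at the poles $a_i$ (see the model weights of Section 4), so neither the existence of a kernel, nor its continuity, nor its strict positivity for pairs $x,y$ lying in different components of $K$ follows from a local Harnack inequality near $K$ alone --- Harnack propagates positivity locally, but to get $\inf_{K\times K}p(t,\cdot,\cdot)>0$ you also need irreducibility and some local ultracontractivity, none of which is proved or cited. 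Your first step ($u\ge T(t)u_0$ via truncation of $V$ and a comparison principle for weak solutions in the sense of (\ref{weak-sol})) is also only sketched; the comparison principle for this weak formulation is exactly the delicate point, and your ``direct route'' test function $T((t-s)^+)f$ is not admissible (no compact support, unclear $W^{2,2}$ regularity across the poles), as you yourself note.

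The paper avoids both problems by never touching the global semigroup. It fixes the punctured, bounded domain $C_R=B(0,R)\setminus\bigcup_i\overline{B(a_i,1/R)}\supset K$, on which $\mu$ is $C^{1,\lambda}$ and bounded away from $0$, solves the \emph{Dirichlet} problem $v_t=Lv$ on $C_R$ with initial datum $\varphi u_0$ by classical theory, and uses the classical Green's function $G(t,x,y)$ of that Dirichlet problem, whose strict positivity and continuity on $(0,\infty)\times C_R\times C_R$ are standard; $c(t)=\min_{K\times K}G(t,\cdot,\cdot)$. The comparison $u\ge v$ is then obtained by a duality argument: one subtracts the two weak formulations, and for an arbitrary $0\le\psi\in C_c^\infty(C_R\times[0,T])$ solves the backward adjoint problem $\partial_t\phi+L\phi=-\psi$ on $C_R$ with zero lateral and terminal data in $W^{2,1}_2(C_R\times(0,T))$ (Ladyzhenskaya--Solonnikov--Ural'tseva), which produces an admissible nonnegative test function and yields $\int_0^T\!\!\int_{C_R}(v-u)\psi\,d\mu\,dt\le0$. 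If you want to salvage your argument, you should replace the global kernel claim by this localized Dirichlet construction (or else actually prove existence, continuity and strict positivity of the global kernel for this class of weights, which is a substantial task not undertaken in the paper).
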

\begin{proof} Let $u_0 \in C_c^{\infty}(\R^N)$ and let $u$ be a weak solution of $(P)$.
Let
$C_R=B(0,R)\setminus \bigcup_{i=1}^n \overline{B(a_i,1/R)}$, with $R$ large enough, such that 
$K\subset C_R$ and let $\varphi\in C_c^{\infty}(C_R)$ such that $0\leq \varphi\leq 1$.

Consider the problem
\[(Pb)\,
 \left\{
 \begin{array}{ll}
 v_t(x,t)=Lv(x,t),& \text{ on }C_R\times (0,T],\\
 v(x,t)=0,&\text{ on } \partial C_R,\\
 v(x,0)=\varphi u_0.
 \end{array}
 \right.
\]
By a classical result,  since $v(x,0)\in C_c^{2+\alpha}(C_R)$, then the problem $(Pb)$ admits a solution 
$v\in C^{2+\alpha,1+\frac{\alpha}{2}}(\overline C_R\times [0,T])$. Moreover,
\[
v(x,t)=\int_{C_R}G(t,x,y)v(y,0)dy 
\]
where $G$ is a strictly positive function on $(0,+\infty)\times C_R\times C_R$.

\noindent Let $\displaystyle c(t)=\min_{(x,y)\in K\times K} G(t,x,y)$. We have for every $x\in K$ 
\[
 v(x,t)\geq \int_{K}G(t,x,y)v(y,0)dy\geq c(t)\int_Kv(y,0)dy.
\]
Furthermore,  $v$ is a weak solution to $v_t=Lv$ in $C_R$. In particular, 
for all $\phi \in W^{2,1}_2 (C_R\times [0,T])$ with $\phi(\cdot,0)\geq0$
having compact support with $\phi(\cdot, T)=0$, we have
$$\int_0^T\int_{C_R}v(-\partial_t\phi -L\phi)\,d\mu\,dt
-\int_{C_R }(\varphi u_0)\phi(\cdot ,0)\,d\mu=0.$$
Comparing with (\ref{weak-sol}), one obtains
\begin{equation}\label{eqA}
\int_0^T\int_{C_R}(v-u)(-\partial_t\phi -L\phi)\,d\mu\,dt
=\int_{C_R }(\varphi u_0-u_0-Vu )\phi(\cdot ,0)\,d\mu\leq 0. 
\end{equation}

Fix $T,\,R>0$, $0\le \psi
\in C_c^\infty(C_R\times [0,T])$ such that ${\rm supp}\,\psi \subset  C_R\times [0,T]$ 
and consider the parabolic problem
$$\left\{\begin{array}{ll}
\partial_t\phi +L\phi =-\psi ,& \hbox{on }C_R\times (0,T),\\
\phi|_{\partial C_R\times (0,T)}= 0,\\
\phi(x,T)=0,& x\in {\mathbb R}^N.
\end{array} \right.
$$
By \cite[Theorem IV.9.1]{lad-sol-ura} we obtain a solution $0\le \phi \in W^{2,1}_2(C_R\times (0,T))$. 
We can insert the solution $\phi$ in (\ref{eqA}). Therefore,
$$\int_0^T\int_{C_R }(v-u)\psi \,d\mu \,dt \le 0$$ for all $0\le \psi \in C_c^\infty(C_R\times [0,T])$. 
Thus,
$$u\geq v\geq c(t)\int_{C_R}\varphi u_0 d\mu.$$
Since the last inequality holds true for every $\varphi \in C_c^\infty (C_R)$ one obtains
$$u\geq c(t)\int_{C_R}u_0 d\mu.$$
\end{proof}

The above results allow us to state the following Theorem by proceeding as in \cite[Theorem 2.1]{GGR}.

\begin{thm}\label{th:cabr-mart-H1} Assume that $\mu$ satisfies the hypothesis $H_4)$ and 
$0\le V\in L_{loc}^1({\mathbb R}^N)$. Then the following hold:
\begin{enumerate}
\item[$1)$] If $\lambda_1(L+V)>-\infty$, then there exists a
positive weak solution \\$u\in C([0,\infty),L^2_\mu)$ of $(P)$
satisfying
\begin{equation}\label{eq222}
\|u(t)\|_{L^2_\mu}\le Me^{\omega t}\|u_0\|_{L^2_\mu},\quad t\ge
0,
\end{equation}
for some constants $M\ge 1$ and $\omega \in {\mathbb R}$.
 \item[$2)$] If
$\lambda_1(L+V)=-\infty$, then for any $0\le u_0\in
L^2_\mu\setminus \{0\},$ there exists no positive weak solution of $(P)$
satisfying \eqref{eq222}.
\end{enumerate}
\end{thm}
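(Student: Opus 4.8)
The plan is to follow the Cabr\'e--Martel strategy, as adapted to the Kolmogorov setting in \cite{CGRT, GGR}, relying on the properties of the semigroup $\{T(t)\}_{t\ge 0}$ collected in Proposition \ref{propH1} and on the strict positivity estimate of Lemma \ref{lm:strictly-positivity}. For part $1)$, assume $\lambda_1:=\lambda_1(L+V)>-\infty$. First I would truncate the potential, setting $V_k=\min\{V,k\}$, so that $V_k\in L^\infty$ and $L+V_k$ generates a positive analytic $C_0$-semigroup $\{S_k(t)\}$ on $L^2_\mu$ by bounded perturbation. The associated bottom of the spectrum $\lambda_1(L+V_k)$ increases in $k$ but stays bounded below by $\lambda_1$, hence $\|S_k(t)\|_{L^2_\mu\to L^2_\mu}\le e^{-\lambda_1 t}$ uniformly in $k$. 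Setting $u_k(t)=S_k(t)u_0$ for $0\le u_0\in L^2_\mu$, these are nonnegative, nondecreasing in $k$ (by the monotonicity of $V_k$ and positivity of the semigroups), and uniformly bounded by $e^{-\lambda_1 t}\|u_0\|_{L^2_\mu}$ on compact time intervals. Therefore $u(t):=\lim_{k\to\infty}u_k(t)$ exists in $L^2_\mu$, satisfies \eqref{eq222} with $M=1$, $\omega=-\lambda_1$, and one checks by passing to the limit in the weak formulation that $u$ is a weak solution of $(P)$; the local integrability $Vu\in L^1(B(0,R)\times(0,T),d\mu\,dt)$ follows from monotone convergence applied to $\int V_k u_k$. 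Strict positivity of $u$ (hence $u>0$, not merely $u\ge0$) comes from Lemma \ref{lm:strictly-positivity} applied to each $u_k$ and passing to the limit.

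For part $2)$, assume $\lambda_1(L+V)=-\infty$ and suppose, for contradiction, that a positive weak solution $u$ of $(P)$ satisfying \eqref{eq222} exists for some $0\le u_0\in L^2_\mu\setminus\{0\}$. The idea is to show that the existence of such a $u$ forces a uniform lower bound on $\lambda_1(L+V_k)$, contradicting $\lambda_1(L+V_k)\uparrow\lambda_1=-\infty$ being impossible, or more directly: one uses $u$ as a supersolution. Concretely, $u$ is also a weak supersolution of $\partial_t w=(L+V_k)w$ for every $k$, since $V_k\le V$. By a comparison argument on the bounded domains $C_R=B(0,R)\setminus\bigcup_i\overline{B(a_i,1/R)}$ — exactly as in Lemma \ref{lm:strictly-positivity} — one obtains $u(t)\ge S_k(t)(\chi_{C_R}u_0)$ in $C_R$, hence after letting $R\to\infty$, $u(t)\ge S_k(t)u_0$ in $L^2_\mu$ for every $k$. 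Taking $L^2_\mu$ norms and using \eqref{eq222} gives $\|S_k(t)u_0\|_{L^2_\mu}\le Me^{\omega t}\|u_0\|_{L^2_\mu}$ uniformly in $k$, so the growth bound of $L+V_k$ is $\le\omega$ for all $k$, i.e. $\lambda_1(L+V_k)\ge-\omega$ for all $k$. But by the variational characterization, choosing a test function $\varphi\in H^1_\mu$ realizing $\int|\nabla\varphi|^2\,d\mu-\int V\varphi^2\,d\mu<-\omega\int\varphi^2\,d\mu$ (possible since $\lambda_1(L+V)=-\infty$, in fact using a function like the $\varphi_{\varepsilon,i}$ of Theorem \ref{thm-opt}), and noting $\int V_k\varphi^2\,d\mu\to\int V\varphi^2\,d\mu$ by monotone convergence, we get $\lambda_1(L+V_k)<-\omega$ for $k$ large, a contradiction.

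The main obstacle I expect is the careful justification of the approximation and comparison steps at the level of weak solutions on the degenerate weighted space: one must verify that $S_k(t)u_0$ is indeed a weak solution in the sense of \eqref{weak-sol} (using part $3)$ of Proposition \ref{propH1}, $T(t)L^2_\mu\subset D(L)$, together with Duhamel's formula for the bounded perturbation $V_k$), that the monotone limit $u$ inherits the weak-solution property including the delicate term $\int_0^T\int Vu\phi\,d\mu\,dt$, and that the comparison principle used to bound $u$ from below by $S_k(t)u_0$ is valid on the exhausting domains $C_R$ with the measure $d\mu$ possibly degenerating at the poles $a_i$ — this is precisely why the density results for $C_c^\infty(\R^N\setminus\{a_1,\dots,a_n\})$ and Lemma \ref{lm:strictly-positivity} were established beforehand. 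A secondary technical point is ensuring the test-function class $W^{2,1}_2$ with compact support is rich enough to run these arguments after the change to $C_R$; this is handled as in \cite[Theorem IV.9.1]{lad-sol-ura} exactly as in the proof of Lemma \ref{lm:strictly-positivity}. Once these measure-theoretic and PDE-regularity matters are in place, the spectral dichotomy follows formally, so I would present the proof by reducing to \cite[Theorem 2.1]{GGR}, indicating only the modifications needed to accommodate the multiple poles and the hypothesis $H_4)$.
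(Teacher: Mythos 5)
Your proposal is correct and follows essentially the same route as the paper, which gives no independent argument but simply invokes \cite[Theorem 2.1]{GGR} together with Proposition \ref{propH1}, the density results and Lemma \ref{lm:strictly-positivity} --- exactly the reduction you describe in your final paragraph. (One minor slip: $\lambda_1(L+V_k)$ is nonincreasing, not increasing, in $k$; what matters, and what you in fact use, is the lower bound $\lambda_1(L+V_k)\ge\lambda_1(L+V)$ coming from $V_k\le V$.)
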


From Theorem \ref{Hardy via IMS}, Theorem \ref{thm-opt} and Theorem \ref{th:cabr-mart-H1} 
we get the following existence and nonexistence result.
\begin{thm}
	Assume that the weight function $\mu$ satisfies hypotheses 
	$H_1)$--$H_4)$ and $0\le V(x)\le \sum_{i=1}^n \frac{c}{|x-a_i|^2}$,
$c>0$, $a_i\in\R^N$, $i\in\{1, \dots, n\}$.
	The following assertions hold:
	\begin{enumerate}
		\item[$1)$] If $0< c\le c_o(N+k_2)=\left( \frac{N+k_2-2}{2} \right)^2$, 
		then there exists a positive weak
		solution $u\in C([0,\infty),L^2_\mu)$ of $(P)$
		satisfying
		\begin{equation}\label{eq:est}
		\|u(t)\|_{L^2_\mu}\le Me^{\omega t}\|u_0\|_{L^2_\mu},\quad
		t\ge 0,
		\end{equation}
		for some constants $M\ge 1$, $\omega \in {\mathbb R}$, and any $u_0\in L^2_\mu$. 
		\item[$2)$] If
		$c> c_o(N+k_2)$, then for any $0\le u_0\in L^2_\mu,\,u_0\neq 0,$
		there is no positive weak solution of $(P)$
		with $V(x)=\sum_{i=1}^n \frac{c}{|x-a_i|^2}$ satisfying 
		(\ref{eq:est}).
	\end{enumerate}
\end{thm}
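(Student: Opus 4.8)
The plan is to derive both assertions by feeding the pointwise bound on $V$ into the Hardy inequality of Theorem~\ref{Hardy via IMS} (for part~1) and into the optimality argument of Theorem~\ref{thm-opt} (for part~2), and then to invoke the abstract dichotomy of Theorem~\ref{th:cabr-mart-H1}. The common thread is to control the bottom of the spectrum
\[
\lambda_1(L+V)=\inf_{\varphi\in H^1_\mu\setminus\{0\}}\frac{\int_{\R^N}|\nabla\varphi|^2\,d\mu-\int_{\R^N}V\varphi^2\,d\mu}{\int_{\R^N}\varphi^2\,d\mu},
\]
showing it is finite when $0<c\le c_o(N+k_2)$ and equal to $-\infty$ when $c>c_o(N+k_2)$. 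Since $H_1)$--$H_4)$ are all assumed, the hypotheses of Theorems~\ref{Hardy via IMS}, \ref{thm-opt} and \ref{th:cabr-mart-H1} are simultaneously available, and $V\in L^1_{loc}(\R^N)$ because $N\ge 3$.

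For part~1, I would first note that $0\le V(x)\le\sum_{i=1}^n\frac{c}{|x-a_i|^2}$ gives, for every $\varphi\in H^1_\mu$,
\[
\int_{\R^N}V\varphi^2\,d\mu\le c\int_{\R^N}\sum_{i=1}^n\frac{\varphi^2}{|x-a_i|^2}\,d\mu .
\]
Since $0<c\le c_o(N+k_2)$ and $\mu$ satisfies $H_1)$ and $H_2')$, Theorem~\ref{Hardy via IMS} bounds the right-hand side by $\int_{\R^N}|\nabla\varphi|^2\,d\mu+\bigl[\frac{k_0+(n+1)c}{r_0^2}+k_1\bigr]\int_{\R^N}\varphi^2\,d\mu$. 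Hence the Rayleigh quotient above is bounded below by $-\bigl(\frac{k_0+(n+1)c}{r_0^2}+k_1\bigr)$, i.e. $\lambda_1(L+V)>-\infty$. Because $\mu$ also satisfies $H_4)$, Theorem~\ref{th:cabr-mart-H1}\,(1) then produces a positive weak solution $u\in C([0,\infty),L^2_\mu)$ of $(P)$ satisfying \eqref{eq:est} for suitable $M\ge 1$ and $\omega\in\R$.

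For part~2, I would take $V(x)=\sum_{i=1}^n\frac{c}{|x-a_i|^2}$ with $c>c_o(N+k_2)$. Here the hypotheses of Theorem~\ref{thm-opt} hold ($H_1)$, $H_2')$ and $H_3)$), and its proof establishes more than the failure of \eqref{MwhiGeneral}: using the truncated test functions $\varphi_{\varepsilon,i}$ concentrated at a pole $a_i$ realizing $H_3)$, with exponent $\eta$ in the indicated range, and letting $\varepsilon\to 0$, it shows outright that $\lambda_1(L+V)=-\infty$. Feeding this into Theorem~\ref{th:cabr-mart-H1}\,(2) yields that for any $0\le u_0\in L^2_\mu\setminus\{0\}$ no positive weak solution of $(P)$ with this $V$ can satisfy \eqref{eq:est}.

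I expect the only delicate point to be the one already handled inside Theorem~\ref{thm-opt}: the passage from the mere non-validity of a fixed-constant inequality to the genuine statement $\lambda_1(L+V)=-\infty$, which requires knowing $\varphi_{\varepsilon,i}\in H^1_\mu$ and that the numerator of the Rayleigh quotient is driven to $-\infty$ (via Fatou's lemma and the non-integrability of $|x-a_i|^{2\eta-2}$ against $d\mu$, which is exactly where $H_3)$ enters). Everything else is a mechanical concatenation of the three cited theorems together with the pointwise domination $V\le\sum_i c/|x-a_i|^2$.
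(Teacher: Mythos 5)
Your proposal is correct and follows exactly the route the paper intends: the paper states this theorem with no written proof beyond the remark that it follows from Theorem~\ref{Hardy via IMS}, Theorem~\ref{thm-opt} and Theorem~\ref{th:cabr-mart-H1}, and your concatenation (domination of $V$ plus the IMS Hardy inequality to get $\lambda_1(L+V)>-\infty$ for part~1; the Rayleigh-quotient computation inside the proof of Theorem~\ref{thm-opt} giving $\lambda_1(L+V)=-\infty$ for part~2) is precisely that argument made explicit. You also correctly identify the one subtle point, namely that Theorem~\ref{thm-opt}'s proof delivers $\lambda_1=-\infty$ and not merely the failure of the inequality, which is what Theorem~\ref{th:cabr-mart-H1}(2) actually requires.
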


\bigskip

\end{document}